\newtheorem{remark}[theorem]{Remark}
\newcommand{\ang}[1]{\langle  #1 \rangle }  
\newcommand{\tr}{{\rm tr}}
\newcommand{\p}{\mathbb{P}}
\newcommand{\Q}{\mathbb{Q}}
\newcommand{\E}[1]{\mathbb{E}\left[#1\right]}
\newcommand{\Et}[1]{\mathbb{E}_t\left[#1\right]}
\newcommand{\trieq}{\stackrel{\triangle}{=}}
\newcommand{\cF}{{\mathcal F}}
\newcommand{\cG}{{\mathcal G}}
\newcommand{\cS}{{\mathcal S}}
\newcommand{\R}{\mathbb{R}}
\newcommand{\id}{{\mathbf 1}}
\newcommand{\as}{\mbox{{\rm a.s.}}}
\title{{Time-Inconsistent Stochastic Linear--Quadratic Control}}
\author{Ying Hu\thanks{IRMAR,
Universit\'e Rennes 1, 35042 Rennes Cedex, France. This author is  partially supported by
the Marie Curie ITN Grant, ``Controlled Systems'', GA no.213841/2008.} \and
Hanqing Jin\thanks{Mathematical Institute and Nomura Centre for Mathematical Finance, and
Oxford--Man Institute of Quantitative Finance, The University of
Oxford, 24--29 St Giles, Oxford OX1 3LB, UK. This author is  partially supported by  research
grants from the Nomura Centre for
Mathematical Finance and the Oxford--Man Institute of Quantitative Finance.} \and
Xun Yu Zhou\thanks{Mathematical Institute and Nomura Centre for Mathematical Finance, and
Oxford--Man Institute of Quantitative Finance, The University of
Oxford, 24--29 St Giles, Oxford OX1 3LB, UK, and Department of
Systems Engineering and Engineering Management, The Chinese
University of Hong Kong, Shatin, Hong Kong. This author is supported by  a start-up fund of the University of Oxford, research
grants from the Nomura Centre for
Mathematical Finance and the Oxford--Man Institute of Quantitative Finance, and an GRF grant \#CUHK419511.}}
\begin{document}

\maketitle

\begin{abstract}
In this paper, we formulate a general time-inconsistent stochastic linear--quadratic (LQ) control problem.
The time-inconsistency arises from the presence of a quadratic term of the expected state as well as  a state-dependent term in the objective functional.
We define an equilibrium, instead of optimal, solution  within the class of open-loop controls, and derive a sufficient condition for equilibrium controls
via
a flow  of forward--backward
stochastic differential equations. When the state is one dimensional and the coefficients in the problem are all deterministic,
we find an explicit equilibrium control. As an application, we then consider a mean-variance portfolio selection model
in a complete financial market where the risk-free rate is a deterministic function of time but all the other market parameters are possibly stochastic
processes. Applying the general sufficient condition,
we obtain explicit equilibrium strategies when the risk premium  is both deterministic and stochastic.
\end{abstract}

\begin{keywords} 
time inconsistency, stochastic LQ control, equilibrium control, forward--backward stochastic differential equation,
mean--variance portfolio selection.
\end{keywords}

\begin{AMS}
93E99, 60H10, 91B28
\end{AMS}

\pagestyle{myheadings}
\thispagestyle{plain}
\markboth{Ying Hu, Hanqing Jin and Xun Yu Zhou}{Time-Inconsistent Stochastic LQ Control}

\section{Introduction}\label{introduction}

Stochastic control is now a mature and well established subject of study \cite{FS,YZ}. Though not explicitly stated at most of the times, a standing assumption
in the study of  stochastic control  is the time consistency, a fundamental property of
conditional expectation with respect to a progressive filtration. As a result, an optimal control viewed from today will remain optimal viewed from tomorrow.
Time-consistency provides the theoretical foundation of the dynamic programming approach including the resulting HJB equation, which is in turn
a pillar of the modern stochastic control theory.

However, there are overwhelmingly more time-inconsistent problems than their time-consistent counterparts. Hyperbolic discounting \cite{A, LP}
 and  continuous-time mean--variance portfolio selection model \cite{ZL,Basak} provide
two well-known  examples of time-inconsistency. Probability distortion, as in behavioral finance models \cite{JZ}, is yet another distinctive source of
time-inconsistency.

One way to get around  the time-inconsistency issue is to consider only  pre-committed controls (i.e., the controls are optimal only when viewed at the initial time);
see, e.g., \cite{ZL} and all the follow-up works to date on the Markowitz problem, as well as \cite{JZ} on the behavioral portfolio choice problem.
While these controls are of practical and theoretical value, they have not really addressed the time-inconsistency nor provided solutions in a dynamic sense.

Motivated by practical applications especially in mathematical finance, time-inconsistent control problems have recently  attracted considerable research interest
and efforts attempting to seek equilibrium, instead of optimal, controls. At a conceptual level, the idea is that a decision the controller makes at every  instant
of time is considered as a game against all the decisions the future incarnations of the controller are going to make. An ``equilibrium" control is therefore one
such that any deviation from it at any time instant will be worse off. Taking this game perspective, Ekeland and Lazrak \cite{EL} approach the (deterministic)
time-inconsistent optimal control, and Bj\"ork and Murgoci \cite{BM} and Bj\"ork, Murgoci and Zhou \cite{BMZ}  extend the idea to the stochastic setting, derive an
(albeit very complicated) HJB equation, and apply the theory to a dynamic Markowitz problem. Yong \cite{Y} investigate a time-inconsistent deterministic
linear--quadratic control problem and derive equilibrium controls via some integral equations.
However,  study of time-inconsistent control is, in general,  still in its infancy.

In this paper we formulate a general stochastic linear--quadratic (LQ) control problem, where the objective functional
 includes both a quadratic term of the expected state and a state-dependent term. These non-standard terms each introduces time-inconsistency into
 the problem in somewhat different ways. Different from most of the existing literature \cite{EL,BM,BMZ,Y} where an equilibrium control is defined
 within the class of {\it feedback} controls, we define our equilibrium via open-loop controls.\footnote{Recall the class of feedback controls is a
 subset of that of open-loop ones. In standard (time-consistent) stochastic control theory, an optimal control is usually defined in the whole class of
 open-loops \cite{FS,YZ}.}  Then we derive a general sufficient condition for equilibriums through a system of forward--backward stochastic
 differential equations (FBSDEs). A intriguing feature of these FBSDEs is that a time parameter is involved; so these form a {\it flow} of
 FBSDEs. When the state process is scalar valued and all the coefficients are deterministic functions of time, we are able to reduce  this flow of FBSDEs
  into several Riccati-like ODEs, and hence obtain explicitly an equilibrium control, which turns out to be a linear feedback.

In the latter part of the paper, we study a continuous-time mean--variance portfolio selection model with state dependent
trade-off between mean and variance. A similar problem was first considered in \cite{BMZ} in the framework of feedback controls and its solution derived via
a very complicated (generalized) HJB equation. Here we allow random market parameters (hence the model and approach of \cite{BMZ} will not work)
and consider open-loop equilibriums. Applying the general sufficient condition and working through a delicate analysis, we will solve
the corresponding FBSDEs and obtain equilibrium strategies. Again, these strategies happen to be linear feedbacks.
We also compare our strategies with the ones in \cite{BMZ} when all the market coefficients are deterministic, and find that they are generally different.
This suggests that how we define equilibrium controls is critical in studying time inconsistent control problems.

The remainder of the paper is organized as follows.  The next section is devoted to the formulation of our problem
and the definition of equilibrium control. In Section \ref{formal-derivation}, we apply the spike variation technique to derive
a flow of FBSDEs and a sufficient condition of equilibrium controls. Based on this general
result, we solve in Section \ref{Deterministic-case} the case when
the state is one dimensional and all the coefficients are deterministic. In Section \ref{random-case}, we formulate a continuous-time mean--variance portfolio
selection model which is a special case of the general LQ model investigated, and derive explicitly its solution. Finally, some concluding remarks
are given in Section 6.

\section{Problem Setting}\label{problemsetting}

Let $T>0$ be the end of a finite time horizon and $(W_t)_{0 \le t \le T}=(W_t^1,\cdots,W_t^d)_{0 \le t \le T}$
a $d$-dimensional Brownian motion on a probability
space $(\Omega, \cF, \p)$. Denote by $(\cF_t)$ the
augmented filtration generated by $(W_t)$.

Throughout this paper, we use the following notation with $l$ being a generic integer:
\begin{table}[http]
\begin{tabular}{rl}
 $\mathbb S^l$:& the set of symmetric $l \times l$ real matrices.\\
  $L^2_{\cG}(\Omega; \, \R^l)$:& the set of random variables $\xi: (\Omega, \cG) \rightarrow (\R^l, {\cal B}(\R^l))$ \\ &with $\E{|\xi|^2}<+\infty$.\\
 $L^\infty_{\cG}(\Omega; \, \R^l)$:& the set of essentially bounded  random variables \\  &$\xi: (\Omega, {\cG}) \rightarrow (\R^l, {\cal B}(\R^l))$.\\
 $L^2_\cG(t, \, T; \, \R^l)$: &the set of $\{\cG_s\}_{s\in [t,T]}$-adapted processes \\ &$f=\{f_s: t\leq s\leq T\}$ with $\E{ \int_t^T|f_s|^2\, ds} < \infty$.\\
 $L^\infty_\cG(t, \, T; \, \R^l)$:&the set of essentially  bounded $\{\cG_s\}_{s\in [t,T]}$-adapted processes.\\
 $L^2_\cG(\Omega; \, C(t, \, T; \, \R^l))$:& the set of continuous 
         $\{\cG_t\}_{s\in [t,T]}$-adapted processes \\&$f=\{f_s: t\leq s\leq T\}$ with $\E{ \sup_{s\in [t,T]}|f_s|^2\, } < \infty$.\\
\end{tabular}
\end{table}

We will often use vectors and matrices in this paper, where all vectors are column vectors. For a matrix $M$,
 define
\begin{itemize}
  \item [] $M'$: Transpose of a matrix $M$.
  \item [] $|M|=\sqrt{\sum_{i,j}m_{ij}^2}$: Frobenius norm of a matrix $M$.
\end{itemize}
For a square matrix $M$, we define
 $\cS(M)=\frac{1}{2}(M+M')$ as the symmetrization of  $M$, and $\tr(M)=\sum_i M_{ii}$ as the trace of $M$.
 For a symmetric matrix $M$, we write $M\succeq 0$ if $M$ is positive semi-definite, and $M\succ 0$ if $M$ is
 positive definite.

\medskip

We consider  a continuous-time,  $n$-dimensional non-homogeneous linear controlled system
\begin{equation}\label{controlgeneral}
dX_s=[A_sX_s+B_s'u_s+b_s]ds+ \sum_{j=1}^d[C_s^jX_s+D_s^{j} u_s+\sigma_s^{j} ]dW_s^{j};\quad X_0=x_0.
\end{equation}
Here $A$ is a bounded deterministic  function on $[0, T]$ with value in $\R^{n\times n}$.
The other parameters $B,C^j,D^j$ are all essentially bounded adapted processes on $[0,T]$
with values in $\R^{ l\times n}$,
$\R^{n\times n}$, $\R^{ n\times l}$,  respectively; $b$ and $\sigma^j$ are stochastic processes
in  $L^2_\cF(0,T; \R^n)$.
The process $u\in L^2_\cF(0, \, T; \, \R^l)$ is the control, and
$X$ is the state process valued in $\R^n$.
Finally $x_0\in \R^n$ is the initial state.
It is obvious that for any control $u\in L^2_\cF(0, \, T; \, \R^l)$, there exists
a unique solution $X\in L^2_\cF(\Omega; \, C(0, \, T; \, \R^n))$.

As time evolves, we need to  consider the controlled system
starting from  time $t\in [0,T]$ and state $x_t\in L^2_{\cF_t}(\Omega; \, \R^n)$:
\begin{equation}\label{controlgeneral:t}
dX_s=[A_sX_s+B_s'u_s+b_s]ds+ \sum_{j=1}^d[C_s^jX_s+D_s^{j} u_s+\sigma_s^{j} ]dW_s^{j};\quad X_t=x_t.
\end{equation}
For any  control $u\in L^2_\cF(t, T;  \R^l)$, there exists
a unique  solution $X^{t,x_t,u}\in L^2_\cF(\Omega; \, C(t,  T;  \R^n))$.

At any time $t$ with the system state $X_t=x_t$, our aim  is to minimize
\begin{eqnarray}\label{costgeneral}
J(t,x_t;u)&\trieq&\frac{1}{2}\mathbb E_t\int_t^T[ \ang{Q_sX_s, X_s}+\ang{ R_su_s, u_s}]ds+\frac{1}{2}\mathbb E_t [\ang{G X_T, X_T}]\nonumber \\
&&- \frac{1}{2} \ang{h\Et{X_T}, \Et{X_T}}-\ang{ \mu_1 x_t+\mu_2,  \Et{X_T}}
\end{eqnarray}
over  $u\in L^2_\cF(t, \, T; \, \R^l)$,
where $X=X^{t,x_t,u}$, and $\Et{\cdot}=\E{\cdot|\cF_t}$.
Here $Q$ and $R$ are both given essentially bounded adapted processes on $[0,T]$ with values in ${\mathbb S}^n$
and ${\mathbb S}^l$ respectively,  $G, h, \mu_1, \mu_2$ are all constants in $\mathbb S^n$, $\mathbb S^n$,   $\R^{n\times n}$ and $\R^n$ respectively.
Throughout this paper, we assume that $Q\succeq 0$, $R\succeq 0$ a.s.,a.e., and $G\succeq 0$.

The first two terms in the cost  functional (\ref{costgeneral}) are standard in a classical LQ control problem, whereas the last two
are unconventional. Specifically, the term $- \frac{1}{2}\ang{h\mathbb E_t[ X_T], \Et{X_T}}$ is motivated by the variance term in a mean--variance
portfolio choice model \cite{HZ,ZL}, and the last term, $-\ang{\mu_1 x_t+\mu_2,  \Et{X_T}}$, which depends on the state $x_t$ at time $t$,  stems
from a state-dependent utility function in economics \cite{BMZ}.

Each of  these two terms introduces time-inconsistency of the underlying model in somewhat different ways. With the time-inconsistency, the notion
``optimality" needs to be defined in an appropriate way. Here we adopt the concept of equilibrium solution, which is, for any $t\in [0,T)$,  optimal
only for spike variation in an infinitesimal way.

Given a  control $u^*$. For any $t\in [0,T)$, $\varepsilon>0$ and $v\in L^2_{\cF_t}(\Omega; \, \R^l)$,  define
\begin{equation}\label{svgeneral}
u^{t,\varepsilon,v}_s=u^*_s+v\id_{s\in [t,t+\varepsilon)},\;\;\;s\in[t,T].
\end{equation}

\begin{definition}
Let $u^*\in L^2_\cF(0, \, T; \, \R^l)$ be a given control and $X^*$ be the state process corresponding to $u^*$. The control $u^*$ is called an equilibrium
 if
$$\lim_{\varepsilon\downarrow 0} \frac{J(t,X^*_t; u^{t,\varepsilon,v})-J(t,X^*_t;u^*)}{\varepsilon}\ge 0,$$
where $u^{t,\varepsilon,v}$ is defined by (\ref{svgeneral}), for any $t\in [0,T)$ and $v\in L^2_{\cF_t}(\Omega; \, \R^l)$.
\end{definition}

Notice that an equilibrium control here is defined in the class of {\it open-loop} controls, which is different from
the one in \cite{Basak}, \cite{BM}, \cite{BMZ},\cite{EL} and \cite{EP} where only {\it feedback} controls are considered. In our definition,
the perturbation of the control
in $[t, t+\varepsilon)$ will not change the control process in $[t+\varepsilon, T]$, whereas it is not the case with feedback controls.

In this paper, we will characterize equilibriums in general case and identify them in some special cases including that of the mean--variance
portfolio selection.

\section{Sufficient Condition of Equilibrium Controls}\label{formal-derivation}

In this section we present a general sufficient condition for equilibriums.
We derive this condition by  the second-order expansion in the spike variation, in the same spirit of proving the stochastic
Pontryagin's maximum principle \cite{peng90,YZ}.

Let $u^*$ be a fixed control and $X^*$ be the corresponding state process. For any $t\in [0, T)$,
define in the time interval $[t, T]$  the  processes $(p(\cdot;t), (k^j(\cdot;t))_{j=1,\cdots, d})\in L^2_\cF(t,T;\R^n)\times (L^2_\cF(t,T;\R^n))^d$ and
$(P(\cdot;t), (K^j(\cdot;t))_{j=1,\cdots,d})\in L^2_\cF(t,T;\mathbb S^n)\times (L^2_\cF(t,T;\mathbb S^n))^d$
as the solutions to the following equations:
\begin{eqnarray}
&&\left\{\begin{array}{ll}
dp(s;t)=&\hspace{-0.3cm}-[A_s'p(s;t)+\sum_{j=1}^d(C_s^j)' k^j(s;t)+Q_sX^*_s]ds\\
        &\hspace{-0.3cm} +\sum_{j=1}^dk^j(s;t)dW_s^j,\;\;s\in[t,T],\\
p(T;t)=&\hspace{-0.3cm}G X^*_T- h \Et{X^*_T}-\mu_1 X_t^*-\mu_2;
\end{array}\right.  \label{adjoint1general} \\
&&\left\{\begin{array}{ll}
dP(s;t)=&\hspace{-0.3cm}-\Big\{A_s'P(s;t)+P(s;t)A_s \\
&\hspace{-0.4cm}+\sum_{j=1}^d[(C_s^j)'P(s;t)C_s^j+(C_s^j)'K^j(s;t)+K^j(s;t)C_s^j]+Q_s\Big\}ds\\
&\hspace{-0.4cm}+\sum_{j=1}^dK^j(s;t)dW_s^j,\;\;s\in[t,T],\\
P(T;t)=&\hspace{-0.3cm}G.
\end{array}\right. \label{adjoint2general}
\end{eqnarray}

Note that for each fixed $t\in[0,T]$, the above equations are backward stochastic differential equations (BSDEs).
So these essentially form a flow of BSDEs. From the assumption that   $Q\succeq 0$ and $G\succeq 0$, it follows that
$P(s;t)\succeq 0$.

\begin{proposition}\label{variate}
 For any $t\in [0,T)$, $\varepsilon>0$ and $v\in L^2_{\cF_t}(\Omega; \, \R^l)$,  define $u^{t,\varepsilon,v}$
by (\ref{svgeneral}). Then
\begin{eqnarray}\label{epsilongeneral}
J(t,X^*_t; u^{t,\varepsilon,v})-J(t,X^*_t; u^*)
=\mathbb E_t\int_t^{t+\varepsilon} \left\{\ang{\Lambda(s;t),v}
+\frac{1}{2}\ang{H(s;t)v, v}\right\}ds+o(\varepsilon)
\end{eqnarray}
where $\Lambda(s;t)\trieq   B_sp(s;t)+\sum_{j=1}^d(D_s^j)'k^j(s;t)+R_su^*_s$ and
$H(s;t)\trieq R_s+\sum_{j=1}^d (D_s^j)'P(s;t)D_s^j$.
\end{proposition}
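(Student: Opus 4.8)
The plan is to compute the difference $J(t,X^*_t;u^{t,\varepsilon,v})-J(t,X^*_t;u^*)$ by tracking how the state perturbation propagates, exactly as in the proof of the stochastic maximum principle. First I would fix $t$ and write $X^{\varepsilon}$ for the state process corresponding to $u^{t,\varepsilon,v}$ and $X^*$ for the one corresponding to $u^*$. Because the control enters the dynamics \eqref{controlgeneral:t} linearly (through $B_s'u_s$ in the drift and $D_s^j u_s$ in the diffusion), the variation $\delta X_s\trieq X^{\varepsilon}_s-X^*_s$ solves a linear SDE with zero initial datum at time $t$ and with a forcing term $B_s'v$ (drift) and $D_s^jv$ (diffusion) active only on $[t,t+\varepsilon)$. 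I would then split $\delta X$ into a first-order part $y$ and a second-order part $z$, where $y$ solves the linear SDE driven solely by the spike forcing and $z$ collects the remainder; standard estimates (as in \cite{peng90,YZ}) give $\E{\sup_s|y_s|^2}=O(\varepsilon)$ and $\E{\sup_s|z_s|^2}=O(\varepsilon^2)$.

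\textbf{Substituting into the cost functional.}

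Next I would plug $X^{\varepsilon}=X^*+y+z$ into each of the four terms of \eqref{costgeneral} and expand to second order in $\delta X$. The running cost $\frac12\Et{\int_t^T\ang{Q_sX_s,X_s}ds}$ and the terminal cost $\frac12\Et{\ang{GX_T,X_T}}$ each contribute a cross term linear in $\delta X$ and a quadratic term $\frac12\ang{Q_s\delta X_s,\delta X_s}$, and similarly for $G$. The running control cost $\frac12\Et{\int_t^T\ang{R_su_s,u_s}ds}$ contributes $\Et{\int_t^{t+\varepsilon}\ang{R_su^*_s,v}ds}+\frac12\Et{\int_t^{t+\varepsilon}\ang{R_sv,v}ds}$ directly. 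The two nonstandard terms, $-\frac12\ang{h\Et{X_T},\Et{X_T}}$ and $-\ang{\mu_1X_t^*+\mu_2,\Et{X_T}}$, expand into terms involving $\Et{\delta X_T}$; here the key observation is that these contribute only at the linear order, and they are precisely the reason the adjoint terminal condition in \eqref{adjoint1general} carries the extra data $-h\Et{X^*_T}-\mu_1X_t^*-\mu_2$.

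\textbf{Absorbing the state terms through the adjoint equations.}

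The heart of the argument is to eliminate all the $X^*$-weighted state-variation terms using the two adjoint processes. I would apply It\^o's formula to $\ang{p(s;t),\delta X_s}$ on $[t,T]$ and to $\ang{P(s;t)\delta X_s,\delta X_s}$, using the BSDEs \eqref{adjoint1general} and \eqref{adjoint2general}. The drift of $p$ was designed to cancel the running term $Q_sX^*_s$ and the $A,C^j$ contributions, so that integrating $d\ang{p,\delta X}$ reproduces the linear-in-$\delta X$ parts of the cost and converts the spike forcing into $\Et{\int_t^{t+\varepsilon}\ang{B_sp(s;t)+\sum_j(D_s^j)'k^j(s;t),v}ds}$; similarly the drift of $P$ is chosen so that $d\ang{P\delta X,\delta X}$ absorbs the quadratic state terms $\frac12\ang{Q_s\delta X_s,\delta X_s}$ and the terminal $\frac12\ang{G\delta X_T,\delta X_T}$, leaving the contribution $\frac12\Et{\int_t^{t+\varepsilon}\sum_j\ang{(D_s^j)'P(s;t)D_s^jv,v}ds}$ from the diffusion spike. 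Collecting the linear terms yields $\ang{\Lambda(s;t),v}$ and the quadratic terms yield $\frac12\ang{H(s;t)v,v}$, with everything else of order $o(\varepsilon)$.

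\textbf{Main obstacle.}

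The delicate point is the careful bookkeeping of orders in $\varepsilon$: one must verify that the second-order variation $z$ interacts with the state-weighting terms only at order $o(\varepsilon)$, and that the product of $y$ with itself over the spike interval integrates (via the $P$-adjoint) to the stated $O(\varepsilon)$ term rather than getting lost. In particular, the reason $P$ rather than just $p$ is needed is that the diffusion coefficient $D_s^ju_s$ makes $\E{|y_s|^2}$ genuinely of order $\varepsilon$ over $[t,t+\varepsilon)$, so the quadratic diffusion spike survives at first order in $\varepsilon$; keeping this term while discarding the other quadratic-in-$\delta X$ contributions as $o(\varepsilon)$ is where the argument must be handled with care. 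Finally, dividing by $\varepsilon$ and letting $\varepsilon\downarrow0$ is not needed for the Proposition itself, which only asserts the expansion \eqref{epsilongeneral}; the limit is taken afterward when verifying the equilibrium condition.
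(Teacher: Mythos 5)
Your proposal is correct and takes essentially the same route as the paper: spike variation, decomposition of the state perturbation into a diffusion-forced part of order $\sqrt{\varepsilon}$ and a drift-forced part of order $\varepsilon$, and duality through the two adjoint BSDEs to absorb the state terms and isolate $\Lambda$ and $H$. The one point you assert without justification---that the nonstandard terms contribute only at linear order---rests on the identity $\Et{Y_T}=0$ for the diffusion-forced first-order variation (so $\Et{\delta X_T}=O(\varepsilon)$ and the $h$-quadratic term is $O(\varepsilon^2)$, not merely the $O(\varepsilon)$ that the estimate $\Et{|\delta X_T|^2}=O(\varepsilon)$ alone would give); the paper records this fact explicitly.
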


\begin{proof}
Let $X^{t,\varepsilon,v}$ be the state process corresponding to $u^{t,\varepsilon,v}$. Then
by the standard perturbation approach (see, e.g., \cite{YZ}), we have
\[ X^{t,\varepsilon,v}_s=X^*_s+Y^{t,\varepsilon,v}_s+Z^{t,\varepsilon,v}_s, \;\;s\in[t,T],\]
where $Y\equiv Y^{t,\varepsilon,v}$ and $Z\equiv Z^{t,\varepsilon,v}$ satisfy
\begin{eqnarray*}
\left\{
\begin{array}{l}
dY_s=A_sY_sds + \sum_{j=1}^d[C_s^jY_s+D_s^j v\id_{s\in [t, t+\varepsilon)}]dW_s^j,\;\;s\in[t,T],\\
Y_t=0;
\end{array}\right.\\
\left\{\begin{array}{l}
dZ_s=[A_sZ_s+B_s' v\id_{s\in [t, t+\varepsilon)}]ds + \sum_{j=1}^dC_s^jZ_sdW_s^j,\;\;s\in[t,T],\\
Z_t=0.
\end{array}\right.
\end{eqnarray*}
 Moreover
$$\Et{Y_s}=0,\;\; \Et{\sup_{s\in [t,T]}|Y_s|^2}=O(\varepsilon),\;\;
\Et{\sup_{s\in [t,T]}|Z_s|^2}=O(\varepsilon^2).$$

By these  estimates, we can calculate
\begin{eqnarray*}
&&2[J(t,X^*_t; u^{t,\varepsilon,v})-J(t,X^*_t,u^*)]\\
&=&\mathbb E_t\int_t^T\left[\ang{ Q_s (2X^*_s+Y_s+Z_s),  Y_s +Z_s }
+\ang{  R_s (2u^*_s+ v),  v}\id_{s\in [t, t+\varepsilon)}    \right]ds\\
&& +2\Et{\ang{G X^*_T,  Y_T+Z_T }}  +\Et{\ang{G (Y_T+Z_T), Y_T+Z_T}}\\
&& - 2\ang{h\Et{X^*_T}+\mu_1 X_t^*+\mu_2, \Et{Y_T+Z_T}}
 -\ang{h\Et{Y_T+Z_T}, \Et{Y_T+Z_T}}\\
&=&\mathbb E_t\int_t^T\left[\ang{ Q_s (2X^*_s+Y_s+Z_s),  Y_s +Z_s }
+\ang{  R_s (2u^*_s+ v),  v}\id_{s\in [t, t+\varepsilon)}    \right]ds\\
&& +\Et{2\ang{GX^*_T-h\mathbb E_t [X^*_T]-\mu_1 X_t^*-\mu_2, Y_T+Z_T }  +\ang{ G(Y_T+Z_T),Y_T+Z_T}}+o(\varepsilon).
\end{eqnarray*}

Recalling that  $(p(\cdot; t),k(\cdot; t))$ and $(P(\cdot; t),K(\cdot; t))$ solve respectively (\ref{adjoint1general}) and (\ref{adjoint2general}), we have
\begin{eqnarray*}
&&\Et {\ang{GX^*_T-h\Et{ X^*_T}-\mu_1 X_t^*-\mu_2, Y_T+Z_T} }\\
&=&\mathbb E_t\int_t^T\{ \ang{p(s;t), A_s(Y_s+Z_s)+B_s' v\id_{s\in [t, t+\varepsilon)} }\\
&&-\ang{ A_s' p(s;t)+\sum_{j=1}^d(C_s^j)' k^j(s;t)+Q_sX^*_s, Y_s+Z_s}\\
&&+\sum_{j=1}^d\ang{k^j(s;t), C_s^j(Y_s+Z_s)+D_s^j v \id_{s\in [t, t+\varepsilon)}}\}ds \\
&=&\mathbb E_t\int_t^T[\ang{-Q_sX^*_s,  Y_s+Z_s}+\ang{ B_sp(s;t)+\sum_{j=1}^d(D_s^j)'k^j(s;t),  v \id_{s\in [t, t+\varepsilon)}}]ds;
\end{eqnarray*}
and
\begin{eqnarray*}
&&\Et {\ang{G(Y_T+Z_T), Y_T+Z_T}}\\
&=&\mathbb E_t\int_0^T\left[-\ang{Q_s(Y_s+Z_s), Y_s+Z_s}+\sum_{j=1}^d \ang{(D_s^j)'P(s;t)D_s  v, v}\id_{s\in [t, t+\varepsilon)} \right]ds
+o(\varepsilon).
\end{eqnarray*}

This proves (\ref{epsilongeneral}).
\end{proof}

\medskip

It follows from  $R\succeq0$ and $P(s;t)\succeq 0$ that  $H(s;t)\succeq 0$.
In view of (\ref{epsilongeneral}),
a sufficient condition for an equilibrium is
\begin{equation}\label{Lambdacond}
\mathbb E_t\int_t^T|\Lambda(s;t)|ds<+\infty,\;\qquad \lim_{s\downarrow t} \Et{\Lambda(s;t)}=0,\;\as, \;\forall t\in[0,T].
\end{equation}

Under some condition, the second equality in (\ref{Lambdacond}) is ensured by
\begin{equation}\label{maximalprinciple}
R_t u^*_t+B_tp(t;t)+\sum_{j=1}^d(D_t^j)'k^j(t;t)=0,\;\as, \;\forall t\in[0,T].
\end{equation}

The following is the main general result for the time-inconsistent stochastic LQ control.

\begin{theorem}\label{maingeneral}
If the following system of  stochastic differential equations
\begin{equation}\label{fbsdegeneral}
\left\{
\begin{array}{l}
dX^*_s=[A_sX^*_s+B_s'u^*_s+b_s]ds+ \sum_{j=1}^d[C_s^jX^*_s+D_s^{j} u^*_s+\sigma_s^{j} ]dW_s^{j},\; s\in[0,T],\\
X^*_0=x_0,\\
dp(s;t)=-[A_s'p(s;t)+\sum_{j=1}^d(C_s^j)' k^j(s;t)+Q_sX^*_s]ds+\sum_{j=1}^dk^j(s;t)dW_s^j,\; s\in[t,T],\\
p(T;t)=G X^*_T- h \Et{X^*_T}-\mu_1 X_t^*-\mu_2
\end{array}\right.
\end{equation}
admits a solution $(u^*, X^*,p,k)$,  for any $t\in [0, T)$, such that  $\Lambda(\cdot; t)\trieq B_\cdot p(\cdot;t)+\sum_{j=1}^d(D_\cdot^j)'k(\cdot;t)^j+R_\cdot u^*_\cdot$
satisfies condition (\ref{Lambdacond}), and $u^*\in L^2_{\cF}(0,T; \R^l)$, then $u^*$ is an equilibrium control.
\end{theorem}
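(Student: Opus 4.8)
The plan is to feed the second-order expansion of Proposition~\ref{variate} directly into the definition of an equilibrium: divide by $\varepsilon$ and let $\varepsilon\downarrow 0$. By Proposition~\ref{variate},
$$\frac{J(t,X^*_t; u^{t,\varepsilon,v})-J(t,X^*_t;u^*)}{\varepsilon}
=\frac{1}{\varepsilon}\,\mathbb E_t\int_t^{t+\varepsilon}\ang{\Lambda(s;t),v}\,ds
+\frac{1}{2\varepsilon}\,\mathbb E_t\int_t^{t+\varepsilon}\ang{H(s;t)v, v}\,ds
+\frac{o(\varepsilon)}{\varepsilon},$$
so it suffices to show that the first term tends to $0$, the second is asymptotically non-negative, and the remainder vanishes. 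The last is immediate, since $o(\varepsilon)/\varepsilon\to 0$.

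For the quadratic term I would invoke the fact already recorded before the theorem, namely $H(s;t)=R_s+\sum_{j=1}^d (D_s^j)'P(s;t)D_s^j\succeq 0$, which follows from $R\succeq 0$ and $P(s;t)\succeq 0$. Because $v$ is fixed and the integrand $\ang{H(s;t)v,v}$ is then non-negative pointwise, each average $\frac{1}{2\varepsilon}\mathbb E_t\int_t^{t+\varepsilon}\ang{H(s;t)v,v}\,ds$ is $\ge 0$; hence this contribution has non-negative $\liminf$ and cannot spoil the inequality.

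The real work is the linear term. Since $v\in L^2_{\cF_t}(\Omega;\R^l)$, it pulls out of the conditional expectation, and the first part of condition (\ref{Lambdacond}), $\mathbb E_t\int_t^T|\Lambda(s;t)|\,ds<+\infty$, legitimizes a conditional Fubini interchange, yielding $\frac{1}{\varepsilon}\int_t^{t+\varepsilon}\ang{\Et{\Lambda(s;t)},v}\,ds$. The second part of (\ref{Lambdacond}), $\lim_{s\downarrow t}\Et{\Lambda(s;t)}=0$ a.s., says precisely that the scalar integrand $s\mapsto\ang{\Et{\Lambda(s;t)},v}$ has right-limit $0$ at $s=t$; a standard Lebesgue-averaging argument then forces $\frac{1}{\varepsilon}\int_t^{t+\varepsilon}\ang{\Et{\Lambda(s;t)},v}\,ds\to 0$ as $\varepsilon\downarrow 0$.

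Combining the three pieces gives $\liminf_{\varepsilon\downarrow 0}\varepsilon^{-1}[J(t,X^*_t;u^{t,\varepsilon,v})-J(t,X^*_t;u^*)]\ge 0$ for every $t\in[0,T)$ and every $v\in L^2_{\cF_t}(\Omega;\R^l)$, which is exactly the defining inequality for an equilibrium. I expect the main obstacle to be the careful justification of the Fubini interchange and the averaging limit at the level of conditional expectations: condition (\ref{Lambdacond}) holds only $\mathbb P$-a.s. and the limit in $s$ is taken inside $\mathbb E_t$, so one must verify that these manipulations remain valid $\mathbb P$-a.s. and that the $\varepsilon$-limit may indeed be passed through the conditional expectation.
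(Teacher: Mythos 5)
Your proposal is correct and follows essentially the same route as the paper's own proof: expand via Proposition~\ref{variate}, discard the quadratic term using $H(s;t)\succeq 0$, pull the $\cF_t$-measurable $v$ out and interchange $\mathbb{E}_t$ with the time integral, and then let condition (\ref{Lambdacond}) kill the averaged linear term. The paper compresses these steps into a single chain of (in)equalities, while you spell out the Fubini and Lebesgue-averaging justifications explicitly, but the argument is the same.
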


\begin{proof}
Given $(u^*, X^*, p, k)$ satisfying the conditions in this theorem,
at any time $t$, for any  $v\in L^2_{\cF_t}(\Omega; \R^l)$, define $\Lambda$ and $H$ as in Proposition \ref{variate}.  Then
\begin{eqnarray*}
\lim_{\varepsilon\downarrow 0} \frac{J(t,X^*_t; u^{t,\varepsilon})-J(t,X^*_t;u^*)}{\varepsilon}
&=&\lim_{\varepsilon\downarrow 0}\frac{\mathbb E_t\int_t^{t+\varepsilon}\left\{\ang{\Lambda(s;t), v}
+\frac{1}{2}\ang{H(s;t)v, v}\right\}ds}{\varepsilon}\\
&\geq &\lim_{\varepsilon\downarrow 0}\frac{\int_t^{t+\varepsilon}\ang{\Et{\Lambda(s;t)}, v}ds}{\varepsilon}\\
&\ge &0,
\end{eqnarray*}
proving the result.
\end{proof}

%

Theorem \ref{maingeneral} involves the existence of solutions to a flow of FBSDEs along with other conditions.
Proving the general existence remains an outstanding open problem. In the rest of this paper we will focus on the case when $n=1$. This case is important especially in
financial applications, as will be demonstrated by the mean--variance portfolio selection model.

When $n=1$, the state process $X$ is a  scalar-valued process evolving by the dynamics
\begin{equation}\label{control}
dX_s=[A_sX_s+B_s'u_s+b_s]ds+ [C_sX_s+D_s u_s+\sigma_s]'dW_s;\quad X_0=x_0,
\end{equation}
where $A$ is a bounded deterministic  scalar function on $[0, T]$.
The other parameters $B,C,D$ are all essentially bounded  and $\cF_t$-adapted processes on $[0,T]$
with values in $\R^{ l}$,
$\R^d$, $\R^{ d\times l}$,  respectively. Moreover,  $b\in L^2_\cF(0,T; \R)$ and $\sigma\in L^2_\cF(0,T; \R^d)$.

In this case,  the two adjoint equations for the equilibrium become
\begin{eqnarray}
&&\left\{\begin{array}{l}
dp(s;t)=-[A_sp(s;t)+C'_s k(s;t)+Q_sX^*_s]ds+k(s;t)'dW_s,\;\;s\in[t,T],\\
p(T;t)=G X^*_T- h \mathbb E_t[X^*_T]-\mu_1 X_t^*-\mu_2;
\end{array}\right.  \label{adjoint1} \\
&&\left\{\begin{array}{l}
dP(s;t)=-[(2A_s+|C_s|^2) P(s;t)+2C'_sK(s;t)+Q_s]ds\\
\quad\quad\quad\quad+K(s;t)'dW_s,\;\;s\in[t,T],\\
P(T;t)=G.
\end{array}\right. \label{adjoint2}
\end{eqnarray}

For reader's convenience,  we state here the $n=1$ version of Theorem \ref{maingeneral}:
\begin{theorem}\label{main}
If the following system of  stochastic differential equations
\begin{equation}\label{fbsde}
\left\{
\begin{array}{l}
dX^*_s=[A_sX^*_s+B'_su^*_s+b_s]ds+ [C_sX_s^*+D_su^*_s+\sigma_s]'dW_s,\;s\in[0,T],\\
X_0^*=x_0,\\
dp(s;t)=-[A_sp(s;t)+C'_sk(s;t)+Q_sX^*_s]ds+k(s;t)'dW_s,\;s\in [t,T],\\
p(T;t)=G X^*_T-h E_t[X^*_T]-\mu_1 X_t^*-\mu_2,\;t\in[0,T]
\end{array}\right.
\end{equation}
admits a solution $(u^*, X^*,p,k)$, for any $t\in [0, T)$, such that $\Lambda(\cdot; t)\trieq p(\cdot;t)B_{\cdot}+D'_{\cdot}k(\cdot;t)+R_{\cdot} u^*_{\cdot}$ satisfies the condition (\ref{Lambdacond}),
and  $u^*\in L^2_\cF(0,T;\R^l)$,
then $u^*$ is an equilibrium control.
\end{theorem}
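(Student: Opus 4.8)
My plan is to regard Theorem \ref{main} as nothing more than the scalar ($n=1$) instance of Theorem \ref{maingeneral}, and to reproduce the variational argument of that theorem, whose entire analytic substance is already packaged in Proposition \ref{variate}. First I would verify that the two adjoint systems (\ref{adjoint1}) and (\ref{adjoint2}) are exactly the $n=1$ reductions of (\ref{adjoint1general}) and (\ref{adjoint2general}): with $n=1$ the quantities $A_s,Q_s,G,h$ and each $C_s^j$ become scalars, so that $\sum_{j=1}^d(C_s^j)'P(s;t)C_s^j=|C_s|^2P(s;t)$ while the two cross terms involving $K^j(s;t)$ reduce to $2C_s'K(s;t)$, whence $(p(\cdot;t),k(\cdot;t))$ and $(P(\cdot;t),K(\cdot;t))$ solve precisely the stated BSDEs. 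Since (\ref{adjoint2}) is a linear BSDE with essentially bounded coefficients and bounded terminal datum $G$, it has a unique solution for each fixed $t$, and the standing assumptions $Q\succeq 0$, $G\succeq 0$ give $P(s;t)\succeq 0$; combined with $R\succeq 0$ this yields $H(s;t)\succeq 0$ for the process $H$ of Proposition \ref{variate}.

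Next, using that the hypothesised solution $(u^*,X^*,p,k)$ of (\ref{fbsde}) furnishes the forward state and first adjoint, I would apply Proposition \ref{variate} directly to obtain, for the spike variation $u^{t,\varepsilon,v}$ of (\ref{svgeneral}),
\[
J(t,X^*_t; u^{t,\varepsilon,v})-J(t,X^*_t;u^*)=\Et{\int_t^{t+\varepsilon}\left\{\ang{\Lambda(s;t),v}+\frac{1}{2}\ang{H(s;t)v,v}\right\}ds}+o(\varepsilon),
\]
with $\Lambda(s;t)=p(s;t)B_s+D_s'k(s;t)+R_su^*_s$ exactly as in the statement. Dividing by $\varepsilon$ and sending $\varepsilon\downarrow 0$, I would discard the quadratic term by $H(s;t)\succeq 0$, and in the linear term exploit the $\cF_t$-measurability of $v$ to write $\Et{\ang{\Lambda(s;t),v}}=\ang{\Et{\Lambda(s;t)},v}$. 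This bounds the limit below by $\lim_{\varepsilon\downarrow 0}\frac{1}{\varepsilon}\int_t^{t+\varepsilon}\ang{\Et{\Lambda(s;t)},v}ds$, and condition (\ref{Lambdacond}) --- whose first half secures integrability so that the average is defined, and whose second half gives $\lim_{s\downarrow t}\Et{\Lambda(s;t)}=0$ a.s. --- forces this Ces\`aro limit to equal $\ang{0,v}=0$. Hence the defining quotient has nonnegative limit for every $t\in[0,T)$ and every $v\in L^2_{\cF_t}(\Omega;\R^l)$, i.e., $u^*$ is an equilibrium.

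Because the statement is a genuine specialization, I do not expect a new structural obstacle in the proof itself; the one analytic point requiring care --- inherited verbatim from Theorem \ref{maingeneral} --- is the passage from the a.s. right-limit $\lim_{s\downarrow t}\Et{\Lambda(s;t)}=0$ to the convergence of the averaged integral $\frac{1}{\varepsilon}\int_t^{t+\varepsilon}\Et{\Lambda(s;t)}ds\to 0$, together with checking that the remainder $o(\varepsilon)$ survives division by $\varepsilon$. This is precisely where (\ref{Lambdacond}) is needed rather than the merely pointwise stationarity relation (\ref{maximalprinciple}). I would note, finally, that the real difficulty hidden in this theorem lies not in the argument above but in its hypothesis: exhibiting an actual solution of the flow of FBSDEs (\ref{fbsde}) meeting (\ref{Lambdacond}), which is the task taken up in the subsequent sections.
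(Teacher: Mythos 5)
Your proposal is correct and follows essentially the same route as the paper: the paper also treats Theorem \ref{main} as the $n=1$ specialization of Theorem \ref{maingeneral}, whose proof applies Proposition \ref{variate}, discards the quadratic term via $H(s;t)\succeq 0$, pulls the $\cF_t$-measurable $v$ out of the conditional expectation, and invokes condition (\ref{Lambdacond}) to make the averaged linear term vanish in the limit. Your added verification that (\ref{adjoint1})--(\ref{adjoint2}) are the scalar reductions of (\ref{adjoint1general})--(\ref{adjoint2general}) is a detail the paper leaves implicit, but it does not change the argument.
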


\section{Equilibrium When Coefficients Are  Deterministic}\label{Deterministic-case}

Theorem \ref{main} shows that one can obtain  equilibrium  controls by solving the system of FBSDEs (\ref{fbsde}). However, the FBSDEs in (\ref{fbsde}) are not standard since a ``flow" of unknowns $(p(\cdot;t), k(\cdot;t))$ is involved. Moreover,  there is an additional
constraint (\ref{Lambdacond}), which under some condition boils down to an algebraic constraint (\ref{maximalprinciple})
that acts on the ``diagonal" (i.e. when $s=t$) of the flow. 
The unique solvability of this type of equations remains a challenging open problem even for the case $n=1$.
However, we are able to solve quite thoroughly this problem when the parameters
$A,B, C, D,b,\sigma, Q$ and $R$ are all deterministic functions.

Throughout this section we assume all the parameters are deterministic functions of $t$. In this case,
the BSDE (\ref{adjoint2}) turns out to be an ODE with solution
$K\equiv 0$ and
$P(s;t)=Ge^{\int_s^T(2A_u+|C_u|^2)du}+\int_s^Te^{\int_s^v(2A_u+|C_u|^2)du}Q_vdv$.

\subsection{An Ansatz}

As in the classical LQ control (see, e.g. \cite{YZ}), we attempt to look for a linear feedback equilibrium.
For this, given any $t\in [0,T]$, we consider the following
Ansatz:
\begin{equation}\label{pst}
p(s;t)=M_sX^*_s -N_s\mathbb E_t[X^*_s] -\Gamma^{(1)}_s X_t^*+\Phi_s,\;\;0\leq t\leq s\leq T,
\end{equation}
where  $M,N,\Gamma^{(1)},\Phi$ are deterministic differentiable functions with $\dot M=m, \dot N=n, \dot \Gamma^{(1)}=\gamma^{(1)}$ and $\dot \Phi=\phi$.

For any fixed $t$, applying Ito's formula to (\ref{pst}) in the time variable $s$ ,
we get
\begin{equation}\label{pst2}
\begin{array}{rl}
&dp(s;t)\\
=&\{M_s(A_sX^*_s+B_s'u^*_s+b_s)+m_sX^*_s-N_s \Et{A_sX^*_s+B_s'u^*_s+b_s}-n_s\Et{X^*_s}\\
&\ -\gamma_s^{(1)}X^*_t+\phi_s\}ds
+M_s(C_sX_s^*+D_su^*_s+\sigma_s)'dW_s.
\end{array}
\end{equation}
Comparing the
 $dW_s$ term with the $dW_s$  term  of $dp(s;t)$ in (\ref{fbsde}), we obtain
\begin{equation}\label{kst}
k(s;t)=M_s[C_sX^*_s+D_su^*_s+\sigma_s],\;\;s\in [t,T].
\end{equation}
Notice that $k(s;t)$ turns out to be independent of $t$.

Now we  ignore the difference between the conditions (\ref{Lambdacond}) and (\ref{maximalprinciple}), and put the above expressions of $p(s;t)$ and $k(s;t)$ into (\ref{maximalprinciple}). Then  we have
$$[(M_s-N_s-\Gamma^{(1)}_s) X^*_s+\Phi_s]B_s+M_sD'_s[C_sX^*_s+D_su^*_s+\sigma_s]+R_su^*_s=0,\;\;s\in [0,T],$$
from which we formally deduce
\begin{equation}\label{ustars}
u^*_s=\alpha_s X^*_s+\beta_s,
\end{equation}
where
\begin{eqnarray*}
\alpha_s&\trieq&  -(R_s+M_sD'_sD_s)^{-1}[(M_s-N_s-\Gamma_s^{(1)})B_s+M_sD'_sC_s],\\
\beta_s&\trieq& -(R_s+M_sD'_sD_s)^{-1}(\Phi_sB_s+M_sD'_s\sigma_s).
\end{eqnarray*}

Next, comparing the $ds$ term in (\ref{pst2}) with the one in (\ref{fbsde}) (we suppress the argument $s$ here), we obtain
\begin{eqnarray*}
0&=&m X^* +M(AX^*+B'u^*+b)-n \mathbb E_t[X^*]-N(A\mathbb E_t[X^*]+B'\mathbb E_t[u^*]+b)-\gamma^{(1)} X^*_t+\phi\\
&&+AMX^*-AN\mathbb E_t[X^*]- A\Gamma^{(1)} X_t^*+A\Phi+MC'[CX^*+Du^*+\sigma]+QX^*\\
&=&[m+2MA+M|C|^2+Q+(MB'+MC'D)\alpha]X^*-[n+2NA+NB'\alpha]\Et{X^*}\\
&&-(\gamma^{(1)}+A\Gamma^{(1)})X_t^*
+[(M-N)(B'\beta+b)+\phi+A\Phi+MC'(D\beta+\sigma)].
\end{eqnarray*}
Notice in the above $X^*\equiv X^*_s$ and $\Et{X^*}\equiv \Et{X^*_s}$ due to the omission of $s$.
This leads to the following equations for $M,N,\Gamma^{(1)},\Phi$ (again the argument $s$ is suppressed):
\begin{equation}\label{Ric1}
\left\{\begin{array}{l}
\dot{M}+(2A+|C|^2)M+Q\\
\;\;-M(B'+C'D)(R+MD'D)^{-1}[(M-N-\Gamma^{(1)})B+MD'C]=0,\;\;s\in[0,T],\\
M_T=G;
\end{array}\right.
\end{equation}
\begin{equation}\label{Ric2}
\left\{\begin{array}{l}
\dot{N}+2AN-NB'(R+MD'D)^{-1}[(M-N-\Gamma^{(1)})B+MD'C]=0,\;\;s\in[0,T],\\
N_T=h;
\end{array}\right.
\end{equation}
\begin{equation}\label{Ric3}
\left\{\begin{array}{l}
  \dot{\Gamma}^{(1)}=-A\Gamma^{(1)},\;\;s\in[0,T],\\
  \Gamma^{(1)}_T=\mu_1 ;
  \end{array}\right.
\end{equation}
\begin{equation}\label{Ric4}
\left\{\begin{array}{l}
\dot{\Phi}+\{A-[(M-N) B'+MC'D](R+MD'D)^{-1}  B\}\Phi+(M-N)b+C'M\sigma\\
\;\; -[(M-N) B'+MC'D](R+MD'D)^{-1}MD'\sigma =0,\;\;s\in[0,T],\\
\Phi_T=-\mu_2.
\end{array}\right.
\end{equation}

The solution to equation (\ref{Ric3}) is $\Gamma^{(1)}_s=\mu_1 e^{\int_s^TA_tdt}$.
Equations (\ref{Ric1}) and (\ref{Ric2}) form a system of coupled Riccati equations\footnote{Strictly speaking, these are not
Riccati equations in the usual sense as they are not symmetric. However, we still use the term so as to see the connection and difference between
time-inconsistent and time-consistent LQ control problems.} for $(M,N)$
\begin{equation}\label{systemRic}
\left\{\begin{array}{lll}
\dot{M}\hspace{-0.3cm}&=&\hspace{-0.3cm}- \left[2A+|C|^2+\Gamma^{(1)} B'(R+MD'D)^{-1}(B+D'C)\right] M-Q\\
  &&\hspace{-0.3cm}+(B+D'C)'(R+MD'D)^{-1}(B+D'C)M^2-B'(R+MD'D)^{-1}(B+D'C)MN,\\
M_T\hspace{-0.3cm}&=&\hspace{-0.3cm} G;\\
\dot{ N}\hspace{-0.3cm}&=&\hspace{-0.3cm}- \left[2A+\Gamma^{(1)} B'(R+MD'D)^{-1}B\right] N+B'(R+MD'D)^{-1}(B+D'C)MN\\
   && -B'(R+MD'D)^{-1}BN^2,\\
 N_T\hspace{-0.3cm}&=&\hspace{-0.3cm}h.\\
\end{array}\right.
\end{equation}

Finally, once we get  the solution for $(M,N)$, equation (\ref{Ric4}) is a simple ODE.
Therefore, it is crucial to solve (\ref{systemRic}), which will be carried out in the next subsection.

\subsection{Solution to Riccati System (\ref{systemRic})}
Formally, we define $J=\frac{M}{N}$, and  study the following equation for $(M,J)$:
\begin{equation}\label{MJformal}
\left\{\begin{array}{lll}
\dot{M}\hspace{-0.3cm}&=&\hspace{-0.3cm}- \left[2A+|C|^2+\Gamma^{(1)} B'(R+MD'D)^{-1}(B+D'C)\right] M-Q\\
&&\hspace{-0.3cm}+(B+D'C)'(R+MD'D)^{-1}(B+D'C)M^2-B'(R+MD'D)^{-1}(B+D'C)\frac{M^2}{J},\\
M_T\hspace{-0.3cm}&=&\hspace{-0.3cm}G;\\
\dot{J}\hspace{-0.3cm}&=&\hspace{-0.3cm}-[|C|^2- C'D(R+MD'D)^{-1}(B+D'C)M+ \Gamma^{(1)} B'(R+MD'D)^{-1}D'C+\frac{Q}{M}] J \\
   &&\hspace{-0.3cm}-B'(R+MD'D)^{-1}D'C M,\\
 J_T\hspace{-0.3cm}&=&\hspace{-0.3cm}\frac{G}{h}.\\
\end{array}\right.
\end{equation}

\begin{proposition}\label{MJequivMN}
If the system (\ref{MJformal}) admits a positive solution pair $(M, J)$, then the system (\ref{systemRic})
admits a positive solution pair $(M, \frac{M}{J})$.
\end{proposition}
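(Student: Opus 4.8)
The plan is to verify directly, by a pointwise algebraic computation, that the pair $(M, M/J)$ solves the system (\ref{systemRic}); no new existence theory is needed, because the unknown $M$ is literally shared between (\ref{MJformal}) and (\ref{systemRic}). Throughout I would write $\Theta := (R+MD'D)^{-1}$ for the symmetric $l\times l$ matrix that appears in both systems, its invertibility along the solution being inherited from the hypothesis that (\ref{MJformal}) admits a solution. I would first dispose of the easy parts: since $M>0$ and $J>0$, the function $N := M/J$ is positive and differentiable on $[0,T]$, so the candidate pair $(M,N)$ is positive; and positivity of $J$ at $T$ forces $G,h>0$, so the terminal value matches, $N_T = M_T/J_T = G/(G/h) = h$. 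For the $M$-equation there is essentially nothing to check: substituting $N=M/J$ into the final term of the $M$-line of (\ref{systemRic}) turns $MN$ into $M^2/J$, which is exactly the corresponding term of the $M$-line of (\ref{MJformal}), and all other terms of the two $M$-lines coincide verbatim. Hence $M$ automatically satisfies the $M$-equation of (\ref{systemRic}).

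The substance is the $N$-equation. I would differentiate $N=M/J$ by the quotient rule, $\dot N = \dot M/J - M\dot J/J^2$, insert the expressions for $\dot M$ and $\dot J$ from (\ref{MJformal}), and then re-express everything via the dictionary $M/J = N$, $M^2/J = MN$, $M^2/J^2 = N^2$. This collects $\dot N$ into the form $\kappa_0 N + \kappa_1 MN + \kappa_2 N^2$ plus a leftover inhomogeneous term built from $Q$. The proposition then reduces to confirming that the inhomogeneous term vanishes and that the three coefficients equal those read off from the $N$-line of (\ref{systemRic}), namely $\kappa_0 = -[2A + \Gamma^{(1)} B'\Theta B]$, $\kappa_1 = B'\Theta(B+D'C)$ and $\kappa_2 = -B'\Theta B$.

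The one genuinely fiddly step will be this final simplification, and it rests on three short cancellations involving the vectors $B, C, D'C$ and the symmetric matrix $\Theta$ (using $C'D = (D'C)'$). First, the $Q$-terms cancel: the $Q/M$ contribution in $\dot J$ produces $(Q/M)N = Q/J$, which is killed by the $-Q/J$ coming from $\dot M/J$. Second, the two $\Gamma^{(1)}$-terms combine through the identity $B'\Theta(B+D'C) - B'\Theta(D'C) = B'\Theta B$, yielding $\kappa_0$. Third, the bilinear and quadratic coefficients telescope: $(B+D'C)'\Theta(B+D'C) - (D'C)'\Theta(B+D'C) = B'\Theta(B+D'C)$ gives $\kappa_1$ (the $MN$ terms being assembled from $\dot M/J$ together with the $-C'D\Theta(B+D'C)M$ piece hidden in $\dot J$), while $B'\Theta(D'C) - B'\Theta(B+D'C) = -B'\Theta B$ gives $\kappa_2$. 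Matching against the $N$-line of (\ref{systemRic}) then completes the verification.

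I expect no conceptual obstacle, only careful bookkeeping: the main risk is keeping straight which vector lives in $\R^l$ versus $\R^d$ so that every quadratic form $B'\Theta(\cdot)$, $(D'C)'\Theta(\cdot)$, etc.\ is well-formed, and tracking signs through the quotient-rule expansion. Once the three cancellations above are in hand the identification of $(M, M/J)$ as a positive solution of (\ref{systemRic}) is immediate.
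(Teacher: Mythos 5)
Your proof is correct: the quotient-rule computation for $\dot N$ with the three cancellations (the $Q/J$ terms, the $\Gamma^{(1)}$ terms combining to $-\Gamma^{(1)}B'(R+MD'D)^{-1}BN$, and the $MN$ and $N^2$ coefficients telescoping via $C'D=(D'C)'$) reproduces exactly the $N$-equation of (\ref{systemRic}), and the $M$-equation and terminal conditions match as you say. This is precisely the direct verification the paper has in mind — its proof consists of the single sentence ``The proof is straightforward'' — so you have simply supplied the details the authors omitted.
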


\begin{proof}
The proof is straightforward.
\end{proof}

In the following two subsections, we will study the system (\ref{MJformal}) for two cases respectively. The main technique
is the truncation method. This method involves  ``truncation functions"
$\cdot \vee c$ for a small number $c>0$,  and $\cdot\wedge K$ for a large number $K$.

\subsubsection{Standard case}
We first consider the standard case where 
$R-\delta I\succeq 0$ for some $\delta>0$.

\begin{theorem}\label{exisMJformal}
Assume that $R-\delta I\succeq 0$ for some $\delta>0$ and $G\ge h>0$.
Then  
(\ref{MJformal}) and (\ref{systemRic}) admit  unique positive solution pairs
 if $\frac{QD'D+|C|^2R}{l}+\Gamma^{(1)}{\cS}(D'CB' )\succeq 0$, and
 either (i) there exists a constant $\lambda\ge 0$ such that $B=\lambda D'C$,  or (ii) $D'D-\delta I\succeq 0$ for some $\delta>0$.
\end{theorem}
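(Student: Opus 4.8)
The plan is to treat (\ref{MJformal}) as a backward ODE system and, after reversing time to $\bar M(\tau)=M(T-\tau)$, $\bar J(\tau)=J(T-\tau)$, as a forward initial-value problem on the open region $\mathcal{O}=\{(M,J):M>0,\ J>0\}$. On $\mathcal{O}$ the right-hand side is smooth: since $R\succeq\delta I$ and $D'D\succeq 0$, the matrix $R+MD'D$ is uniformly positive definite for $M\ge 0$, so $(R+MD'D)^{-1}$ is smooth and bounded, and the only singularities of the vector field come from the factors $M^2/J$ and $Q/M$, which are harmless on $\mathcal{O}$. Hence Cauchy--Lipschitz gives a unique local solution, and the whole problem reduces to proving a priori bounds that keep the trajectory inside a compact subset of $\mathcal{O}$; the solution then extends to all of $[0,T]$ and is unique.

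To realise these bounds constructively I would follow the truncation method indicated above. Fix constants $0<c<K$ and replace $M$ by $(M\vee c)\wedge K$ in every denominator and in the term $Q/M$, and $J$ by $J\vee c$ in the term $M^2/J$; the truncated vector field is globally Lipschitz and bounded, so the modified system has a unique solution on all of $[0,T]$. The substance of the argument is then to show that, for a suitable choice of $c$ and $K$, this solution automatically satisfies $c\le M\le K$ and $J\ge c$, so the truncations are never active and we recover a genuine solution of (\ref{MJformal}).

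The a priori estimates are where the hypotheses enter, and I would organise them as follows. First, positivity and a lower bound for $M$: since $Q\ge 0$ and the terminal value $G>0$, at the level $M=0$ one reads off $\dot M=-Q\le 0$, which after time reversal keeps $\bar M$ from crossing $0$; a comparison/differential-inequality argument then yields a uniform lower bound $M\ge c>0$. Second, an upper bound for $M$: the dangerous term is $+(B+D'C)'(R+MD'D)^{-1}(B+D'C)M^2$. In case (ii), $D'D\succeq\delta I$ forces $(B+D'C)'(R+MD'D)^{-1}(B+D'C)=O(1/M)$ as $M\to\infty$, so this term grows only linearly in $M$ and a Gronwall argument gives $M\le K$; in case (i), $B=\lambda D'C$ lets me merge the two $M^2$-terms into a single quadratic form whose coefficient $(D'C)'(R+MD'D)^{-1}(D'C)\,[-(1+\lambda)^2+\lambda(1+\lambda)/J]$ is dissipative once $J$ is bounded below by $\lambda/(1+\lambda)$. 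Third, the bounds on $J$: for frozen $M$ the $J$-equation is \emph{linear} in $J$, so the integrating-factor representation $J(s)=e^{\int_s^T a}\big[\,J_T+\int_s^T e^{-\int_r^T a}\,b(r)\,dr\,\big]$ is available, with $J_T=G/h\ge 1$, and two-sided positivity follows from signing the coefficient $a$ and the forcing $b$. This is exactly where the assumption $\frac{QD'D+|C|^2R}{l}+\Gamma^{(1)}\cS(D'CB')\succeq 0$ is used, to guarantee the correct sign of the relevant combination and so keep $J$ bounded away from $0$ and above.

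With the two-sided bounds in hand, removing the truncation gives a positive solution $(M,J)$ of (\ref{MJformal}) on $[0,T]$; uniqueness follows because on the compact set $\{c\le M\le K,\ J\ge c\}$ the vector field is Lipschitz, so any two positive solutions agree by Gronwall. Finally Proposition \ref{MJequivMN} transfers existence and uniqueness to the Riccati system (\ref{systemRic}) through $N=M/J$. The main obstacle I anticipate is closing the estimates for $M$ and $J$ \emph{simultaneously}: the coupling enters through $M^2/J$ in the $M$-equation and through $Q/M$ in the $J$-equation, so the lower bound on $M$, the lower bound on $J$, and the upper bound on $M$ cannot be obtained independently. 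The structural hypothesis and the dichotomy (i)/(ii) are precisely the devices that break this circular dependence and let the bootstrap close.
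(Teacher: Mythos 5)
Your proposal follows essentially the same route as the paper's proof: truncate the system so that it becomes globally Lipschitz (hence uniquely solvable on $[0,T]$), establish the a priori bounds in a bootstrap --- $M>0$ via the linear-ODE representation, $J\ge 1$ via signing the forcing term in the linear-in-$J$ equation (this is exactly where $\frac{QD'D+|C|^2R}{l}+\Gamma^{(1)}\cS(D'CB')\succeq 0$ enters), and $M$ bounded above via the dichotomy $B=\lambda D'C$ versus $D'D\succeq \delta I$ --- then choose the truncation levels $c,K$ so the truncations are never active, and transfer to (\ref{systemRic}) by Proposition \ref{MJequivMN}. The differences (time reversal, merging the two $M^2$ terms in case (i) rather than bounding $k$ from below, and the explicit Gronwall uniqueness argument on a compact subregion) are cosmetic, so the proposal is correct in outline and matches the paper's truncation method.
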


\begin{proof} For fixed $c>0$ and $K>0$, consider the following truncated system of (\ref{MJformal}):
\begin{equation}\label{MJformaltr}
\left\{\begin{array}{lll}
\dot{M}\hspace{-0.3cm}&=&- \left[2A+|C|^2+\Gamma^{(1)} B'(R+M^+D'D)^{-1}(B+D'C)\right] M-Q\\
&&+(B+D'C)'(R+M^+D'D)^{-1}(B+D'C)M(M^+\wedge K)\\
&&-B'(R+M^+D'D)^{-1}(B+D'C)\frac{M(M^+\wedge K)}{J\vee c},\\
M_T\hspace{-0.3cm}&=&G;\\
\dot{J}\hspace{-0.3cm}&=&-\lambda^{(1)} J -B'(R+M^+D'D)^{-1}D'C (M^+\wedge K),\\
 J_T\hspace{-0.3cm}&=&\frac{G}{h}\\
\end{array}\right.
\end{equation}
where $M^+=\max\{M, 0\}$ and
$$\lambda^{(1)}\trieq |C|^2- C'D(R+M^+D'D)^{-1}(B+D'C)(M^+\wedge K)+ \Gamma^{(1)} B'(R+M^+D'D)^{-1}D'C+\frac{Q}{M\vee c}.$$

Since $R-\delta I\succeq 0$, the above system is locally Lipschitz with linear growth,
hence it admits a unique solution $(M^{c,K}, J^{c,K})$. We omit the superscript
$(c, K)$ when no confusion might arise.

We are going to prove that $J\ge 1$, and $M\in [\eta, L]$ for some $\eta>0$ and $L>0$
independent of $c$ and $K$ appearing in the truncation functions.
To this end, denote
\begin{eqnarray*}
\lambda^{(2)}&=& (2A+|C|^2+\Gamma^{(1)} B'(R+M^+D'D)^{-1}(B+D'C))\\
&&-(B+D'C)'(R+M^+D'D)^{-1}(B+D'C)(M^+\wedge K)\\
&&   +B'(R+M^+D'D)^{-1}(B+D'C)\frac{M^+\wedge K}{J\vee c}.
\end{eqnarray*}
 Then $\lambda^{(2)}$ is bounded, and $M$ satisfies
\begin{equation} \label{linearM}
  \dot M+\lambda^{(2)} M+Q=0,\; M_T=G.
\end{equation}
Hence $M>0$. As a result, the terms $R+M^+D'D$ and $M^+$ can be
replaced by $R+MD'D$ and $M$ respectively in (\ref{MJformaltr}) without changing their values.

Now we prove $J\ge 1$. Denote $\tilde J\trieq J-1$, then $\tilde J$ satisfies the ODE
\begin{eqnarray*}
\dot{\tilde J}&=&-\lambda^{(1)}\tilde J-\left[ \lambda^{(1)}+B'(R+MD'D)^{-1}D'C(M\wedge K)\right] \\
&=&-\lambda^{(1)}\tilde J-a^{(1)}
\end{eqnarray*}
where
\begin{eqnarray*}
a^{(1)}&=&\lambda^{(1)}+B'(R+MD'D)^{-1}D'C(M\wedge K)\\
&=&|C|^2-C'D(R+MD'D)^{-1}D'C(M\wedge K)+ \Gamma^{(1)}B'(R+MD'D)^{-1}D'C+\frac{Q}{M\vee c}\\
&\ge &|C|^2-C'D(R+MD'D)^{-1}D'C)M+ \Gamma^{(1)}B'(R+MD'D)^{-1}D'C+\frac{Q}{M\vee c}\\
&=&\tr \left\{(R+MD'D)^{-1}\frac{|C|^2+Q/(M\vee c)}{l}(R+MD'D)\right\}\\
&&      -\tr\{(R+MD'D)^{-1}D'CC'DM\}+\tr\{(R+MD'D)^{-1}\Gamma^{(1)}D'CB'\}\\
 &=&\tr\left\{(R+MD'D)^{-1}H \right\}
\end{eqnarray*}
with $H\trieq\frac{|C|^2+Q/(M\vee c)}{l}(R+MD'D)-D'CC'DM+\Gamma^{(1)}\cS(D'CB' ) $.

When $c$ is small enough such that 
$R-cD'D\succeq 0$, we have
$$\frac{Q}{M\vee c}(R+MD'D)\ge QD'D.$$
Furthermore,
$$\frac{|C|^2}{l}D'D-D'CC'D\succeq 0.$$
Hence,
\begin{eqnarray*}
H\succeq \frac{QD'D+|C|^2R}{l}+\Gamma^{(1)}\cS(D'CB')\succeq 0,
\end{eqnarray*}
and consequently $a^{(1)}\ge \tr\{(R+MD'D)^{-1}H\} \ge 0$.\footnote{Here we used the inequality that $\tr(AB)\ge 0$ for any  positive semi-definite matrices $A,B$.} We deduce that $\tilde J\ge 0$,
or equivalently $J\ge 1$.

Next we prove $M$ is bounded above by a constant $L>0$ independent of the truncation.
Choosing $c$ small enough,  the equation for $M$ turns out to be
$$ \left\{\begin{array}{l}
-\dot{M}= \left(2A+|C|^2+\Gamma^{(1)} B'(R+MD'D)^{-1}(B+D'C)\right)M+Q-k M(M\wedge K),\\
M_T=G
\end{array}\right.
$$
where
\begin{eqnarray*}
k&=&(B+D'C)'(R+MD'D)^{-1}(B+D'C)-B'(R+MD'D)^{-1}(B+D'C)\frac{1}{J}\\
&=&B'(R+MD'D)^{-1}B\left(1-\frac{1}{J}\right)+B'(R+MD'D)^{-1}D'C\left(2-\frac{1}{J}\right)\\
&&+C'D(R+MD'D)^{-1}D'C\\
&\ge&B'(R+MD'D)^{-1}D'C\left(2-\frac{1}{J}\right).
\end{eqnarray*}

If $B=\lambda D'C$ for some $\lambda\ge 0$, then we have $k\ge 0$. Hence $M$ admits an upper bound $L$  independent of $c$ and $K$.

If $D'D- \delta I\succeq 0$,
then $|kM|$ admits a bound independent of $c$ and $K$; hence once again $M$ admits an upper bound $L$ independent of $c$ and $K$.

Choosing $K=L$ and examining again equation (\ref{linearM}) we deduce that there exists $\eta>0$
independent of $c$ such that $M\ge \eta$. It now suffices to take $c=\eta$ to finish the proof.
\end{proof}

\subsubsection{Singular case}
Let us now consider the singular case $R\equiv0$. We suppose here that $D'D-\delta I\succeq 0$ for some $\delta>0$ in this
subsection. Then the system of $(M, J)$ is
 \begin{equation}\label{MJequation}
\left\{\begin{array}{lll}
\dot{M}\hspace{-0.3cm}&=&- \left[2A+|C|^2 -(B+D'C)'(D'D)^{-1}(B+D'C) +B'(D'D)^{-1}(B+D'C)\frac{1}{J}\right] M\\
&&-Q-\Gamma^{(1)} B'(D'D)^{-1}(B+D'C)\\
M_T\hspace{-0.3cm}&=&G;\\
\dot{J}\hspace{-0.3cm}&=&-[|C|^2- C'D(D'D)^{-1}(B+D'C)+ (\Gamma^{(1)} B'(D'D)^{-1}D'C+Q)\frac{1}{M}] J \\
   &&-B'(D'D)^{-1}D'C,\\
 J_T\hspace{-0.3cm}&=&\frac{G}{h}.
\end{array}\right.
\end{equation}
This system is even easier than the previous one.
We will use the same truncation argument  to prove the existence of a solution.

\begin{theorem}
  Given $G\ge h>0$, $R\equiv 0$ and $D'D-\delta I\succeq 0$ for some $\delta>0$.
  If $Q+\Gamma^{(1)} B'(D'D)^{-1}(B+D'C)\ge 0$ and $Q+\Gamma^{(1)} B'(D'D)^{-1}D'C\ge 0$, then  (\ref{MJequation}) and (\ref{systemRic}) admit positive
  solution pairs.
\end{theorem}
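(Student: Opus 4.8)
The plan is to imitate, in a simplified form, the truncation argument of Theorem \ref{exisMJformal}. Because $R\equiv 0$ the Riccati-type quadratic term in $M$ has disappeared---the factor $(D'D)^{-1}$ replaces $(R+MD'D)^{-1}$ and cancels one power of $M$---so the $M$-equation in (\ref{MJequation}) is \emph{linear} in $M$ once $J$ is frozen. Consequently no upper truncation $\cdot\wedge K$ is needed, and it suffices to regularize the two singular factors $\frac1J$ and $\frac1M$. First I would introduce, for each $c>0$, the truncated system obtained from (\ref{MJequation}) by replacing $\frac1J$ with $\frac1{J\vee c}$ and $\frac1M$ with $\frac1{M\vee c}$. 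Since $D'D-\delta I\succeq 0$ makes $(D'D)^{-1}$ bounded, the resulting right-hand side is locally Lipschitz with linear growth in $(M,J)$, so the truncated system has a unique global solution $(M^c,J^c)$, whose superscript I suppress. The goal is then to show $J\ge 1$ and $\eta\le M\le L$ with $\eta,L>0$ independent of $c$, after which taking $c$ small enough removes the truncation.

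Next I would prove $J\ge 1$, exactly as in the standard case. Writing the $J$-equation as $\dot J=-\lambda^{(1)}J-B'(D'D)^{-1}D'C$ with $\lambda^{(1)}\trieq |C|^2-C'D(D'D)^{-1}(B+D'C)+(\Gamma^{(1)}B'(D'D)^{-1}D'C+Q)\frac1{M\vee c}$, I set $\tilde J\trieq J-1$, so that $\dot{\tilde J}=-\lambda^{(1)}\tilde J-a^{(1)}$ with $a^{(1)}\trieq\lambda^{(1)}+B'(D'D)^{-1}D'C$. The crucial point is $a^{(1)}\ge 0$. Here the algebra is cleaner than in Theorem \ref{exisMJformal}: since $(D'D)^{-1}$ is symmetric, the scalar identity $C'D(D'D)^{-1}B=B'(D'D)^{-1}D'C$ cancels the cross terms, leaving
\[
a^{(1)}=\big(|C|^2-C'D(D'D)^{-1}D'C\big)+\big(Q+\Gamma^{(1)}B'(D'D)^{-1}D'C\big)\frac1{M\vee c}.
\]
The first bracket equals $C'(I-D(D'D)^{-1}D')C\ge 0$, as $D(D'D)^{-1}D'$ is the orthogonal projection onto the range of $D$; the second is nonnegative by the hypothesis $Q+\Gamma^{(1)}B'(D'D)^{-1}D'C\ge 0$ together with $\frac1{M\vee c}>0$. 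With $a^{(1)}\ge 0$ and terminal value $\tilde J_T=\frac Gh-1\ge 0$ (using $G\ge h$), the backward-ODE variation-of-constants formula gives $\tilde J\ge 0$, i.e. $J\ge 1$.

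Finally I would bound $M$. Once $J\ge 1$ we have $\frac1{J\vee c}\le 1$, so the coefficient $\lambda^{(2)}$ multiplying $M$ in the linear equation $\dot M=-\lambda^{(2)}M-q$, with $q\trieq Q+\Gamma^{(1)}B'(D'D)^{-1}(B+D'C)$, is bounded by a constant $\Lambda_0$ independent of $c$. Solving this scalar linear backward ODE with $M_T=G>0$, and using $q\ge 0$ (the second hypothesis), yields $0<Ge^{-\Lambda_0T}\le M\le L$ with $L$ likewise independent of $c$. Setting $\eta\trieq Ge^{-\Lambda_0T}$ and choosing $c\le \eta\wedge 1$, both truncations become inactive ($M\vee c=M$, $J\vee c=J$), so $(M,J)$ solves (\ref{MJequation}) and is positive. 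Proposition \ref{MJequivMN} (with $R\equiv 0$) then delivers the positive solution pair $(M,\tfrac MJ)$ of (\ref{systemRic}).

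The only genuinely delicate step is the sign of $a^{(1)}$; everything else is the linear-ODE comparison machinery. I expect the main obstacle to be purely bookkeeping---verifying that the constants $\eta,\Lambda_0,L$ really do not depend on the truncation level $c$, which hinges on first establishing $J\ge 1$ (so that $\frac1{J\vee c}\le 1$) before estimating $M$, and on the boundedness of $(D'D)^{-1}$ supplied by $D'D-\delta I\succeq 0$.
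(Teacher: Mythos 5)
Your proposal is correct and takes essentially the same route as the paper's own proof: the same truncation of the singular factors $\frac{1}{J}\mapsto\frac{1}{J\vee c}$, $\frac{1}{M}\mapsto\frac{1}{M\vee c}$, the same comparison argument for $\tilde J=J-1$ (your $a^{(1)}$ is the paper's $a^{(3)}$, with the cross-term cancellation and the projection bound $C'D(D'D)^{-1}D'C\le |C|^2$ that the paper uses implicitly), and the same linear-ODE lower bound $M\ge\eta$ driven by $\tilde Q=Q+\Gamma^{(1)}B'(D'D)^{-1}(B+D'C)\ge 0$, followed by choosing $c$ small enough to deactivate the truncations and invoking Proposition \ref{MJequivMN}. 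The only cosmetic difference is that you also record an upper bound $L$ for $M$, which is harmless but not needed here since, as you yourself observe, the singular case requires no $\cdot\wedge K$ truncation.
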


\begin{proof}
 For a fixed $c>0$, consider the following truncated system:
 \begin{equation}\label{MJequationtr}
\left\{\begin{array}{lll}
\dot{M}\hspace{-0.3cm}&=&- \left[2A+|C|^2 -(B+D'C)'(D'D)^{-1}(B+D'C)+B'(D'D)^{-1}(B+D'C)\frac{1}{J\vee c} \right] M\\
&&-Q-\Gamma^{(1)} B'(D'D)^{-1}(B+D'C),\\
M_T\hspace{-0.3cm}&=&G;\\
\dot{J}\hspace{-0.3cm}&=&-[|C|^2- C'D(D'D)^{-1}(B+D'C)+ (\Gamma^{(1)} B'(D'D)^{-1}D'C+Q)\frac{1}{M\vee c}] J \\
   &&-B'(D'D)^{-1}D'C,\\
 J_T\hspace{-0.3cm}&=&\frac{G}{h}.\\
\end{array}\right.
\end{equation}
This system is locally Lipschitz with linear growth, hence it admits a unique solution pair  $(M,J)$ depending on $c$.

Define $\tilde J=J-1$. Then
$$\dot{\tilde J}=-\lambda^{(3)} \tilde J-a^{(3)}$$
with $\lambda^{(3)}=|C|^2- C'D(D'D)^{-1}(B+D'C)+ (\Gamma^{(1)} B'(D'D)^{-1}D'C+Q)\frac{1}{M\vee c}$ being bounded, and
\begin{eqnarray*}
   a ^{(3)}&=&\lambda^{(3)}+B'(D'D)^{-1}D'C\\
   &=&|C|^2- C'D(D'D)^{-1}D'C+ (\Gamma^{(1)} B'(D'D)^{-1}D'C+Q)\frac{1}{M\vee c}\\
   &\ge &(\Gamma^{(1)} B'(D'D)^{-1}D'C+Q)\frac{1}{M\vee c}\\
   &\ge& 0.
\end{eqnarray*}
Hence $J\ge 1$. Now we choose $c\le 1$.

Denote $\lambda^{(4)}=2A+|C|^2 -(B+D'C)'(D'D)^{-1}(B+D'C)+B'(D'D)^{-1}(B+D'C)\frac{1}{J\vee c}$, $\tilde Q=Q+\Gamma^{(1)} B'(D'D)^{-1}(B+D'C)\ge 0$. Then
$|\lambda^{(4)}|$  admits a bound independent of $c$, and
$$\dot{M}+\lambda^{(4)} M+\tilde Q=0, \; M_T=G.$$
Hence there exists some $\eta>0$ (independent of $c$) such that $M\ge \eta$.
Choosing $c=\eta$, we conclude the proof.
\end{proof}

\subsection{Equilibrium Controls}

We now present the main result of this section.
\begin{theorem}\label{detmain}
Suppose $G\ge h>0$, The system of the Riccati equations (\ref{systemRic}) admits a unique positive solution pair $(M,N)$ in the following three cases:
\begin{itemize}
\item [(i)]$R-\delta I\succeq 0$ for some $\delta>0$,  $\frac{QD'D+|C|^2R}{l}+\Gamma^{(1)}\cS(D'CB') \succeq 0$ and $B=\lambda D'C$ for some $\lambda\ge 0$;
\item [(ii)]  $R-\delta I\succeq 0$ for some $\delta>0$, $\frac{QD'D+|C|^2R}{l}+\Gamma^{(1)}\cS(D'CB') \succeq 0$ and $D'D-\delta I\succeq 0$ for some $\delta>0$;
\item [(iii)] $R\equiv 0$, $D'D-\delta I\succeq 0$ for some $\delta>0$, $Q+\Gamma^{(1)} B'(D'D)^{-1}(B+D'C)\ge 0, \, Q+\Gamma^{(1)} B'(D'D)^{-1}D'C\ge 0$.
\end{itemize}
Moreover,
let $\Phi$ be
a solution of ODE (\ref{Ric4}). Then  $u^*(\cdot)$  given by (\ref{ustars}) is an equilibrium.
\end{theorem}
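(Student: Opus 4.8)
The plan is to separate the argument into two parts: first the solvability of the Riccati system (\ref{systemRic}), then the verification that the feedback law (\ref{ustars}) meets the sufficient condition of Theorem \ref{main}. The first part needs essentially no new work. In cases (i) and (ii) the existence and uniqueness of a positive pair $(M,N)$ is exactly the content of Theorem \ref{exisMJformal}, while in case (iii) existence is supplied by the singular-case theorem of the preceding subsection; in each case Proposition \ref{MJequivMN} provides the passage from the decoupled pair $(M,J)$ to $(M,N)=(M,M/J)$, and for case (iii) uniqueness follows because any positive bounded solution of (\ref{MJequation}) solves the corresponding truncated system (\ref{MJequationtr}), which is locally Lipschitz. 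So the substance of the proof lies in the second part, namely in turning the \emph{formal} Ansatz computation of Section 4.1 into a genuine solution of the flow of FBSDEs (\ref{fbsde}).

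To this end I would first build the candidate. Given the positive pair $(M,N)$, set $\Gamma^{(1)}_s=\mu_1 e^{\int_s^T A_u\,du}$ (the solution of (\ref{Ric3})) and let $\Phi$ solve the linear ODE (\ref{Ric4}). This ODE is well posed because its coefficients are bounded, which rests on the uniform invertibility of $R_s+M_sD_s'D_s$: the latter holds in all three cases since $M$ is bounded above and bounded below away from $0$ (properties extracted inside the existence proofs), and either $R\succeq\delta I$ in cases (i), (ii) or $R\equiv0$ with $D'D\succeq\delta I$, whence $R+MD'D\succeq\eta\delta I$, in case (iii); moreover $N=M/J$ with $J\ge1$ is bounded. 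Consequently the gains $\alpha,\beta$ defined after (\ref{ustars}) are bounded, the closed-loop equation (\ref{control}) with $u^*_s=\alpha_s X^*_s+\beta_s$ has a unique solution $X^*\in L^2_\cF(\Omega;C(0,T;\R))$, and $u^*\in L^2_\cF(0,T;\R^l)$. Finally I would define $p(s;t)$ by the Ansatz (\ref{pst}) and $k(s;t)$ by (\ref{kst}); boundedness of the deterministic coefficients together with $X^*,u^*\in L^2$ puts $p(\cdot;t),k(\cdot;t)$ in the required spaces.

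Next I would verify that $(u^*,X^*,p,k)$ actually solves (\ref{fbsde}). Applying It\^o's formula to (\ref{pst}) reproduces (\ref{pst2}); matching its $dW_s$ coefficient against the backward equation in (\ref{fbsde}) gives precisely (\ref{kst}), and matching the $ds$ drift against the required drift $-[A_sp(s;t)+C_s'k(s;t)+Q_sX^*_s]$, after substituting $u^*=\alpha X^*+\beta$, produces an expression that is a linear combination of $X^*_s$, $\Et{X^*_s}$, $X^*_t$ and a free term. Equations (\ref{Ric1})--(\ref{Ric4}) are exactly the statements that each of these four coefficients vanishes, so by construction the drift matches identically. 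The terminal condition $p(T;t)=GX^*_T-h\Et{X^*_T}-\mu_1 X^*_t-\mu_2$ follows from the terminal data $M_T=G$, $N_T=h$, $\Gamma^{(1)}_T=\mu_1$, $\Phi_T=-\mu_2$. Thus the forward--backward system is solved for every $t$.

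It remains to check condition (\ref{Lambdacond}) for $\Lambda(s;t)=p(s;t)B_s+D_s'k(s;t)+R_su^*_s$ and then invoke Theorem \ref{main}. Integrability $\Et{\int_t^T|\Lambda(s;t)|\,ds}<\infty$ is immediate from boundedness of the deterministic coefficients and $X^*,u^*\in L^2$. For the diagonal limit, evaluating $\Lambda$ at $s=t$ and using $\Et{X^*_t}=X^*_t$ collapses the Ansatz to the defining relation of $u^*$, i.e. to (\ref{maximalprinciple}), so $\Lambda(t;t)=0$ a.s. Here is where the deterministic structure is used: since $B,C,D,\sigma,R,M,N,\Gamma^{(1)},\Phi$ are continuous in $s$ and $s\mapsto\Et{X^*_s}$, $s\mapsto\Et{u^*_s}$ are continuous by the $L^2$-continuity of $X^*$, one obtains $\lim_{s\downarrow t}\Et{\Lambda(s;t)}=\Lambda(t;t)=0$ a.s. I expect this diagonal limit --- the genuine strengthening of the algebraic identity (\ref{maximalprinciple}) to the limit required in (\ref{Lambdacond}) --- to be the only delicate point of the verification, the rest being bookkeeping that merely reverses the formal derivation. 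With (\ref{Lambdacond}) in hand, Theorem \ref{main} concludes that $u^*$ given by (\ref{ustars}) is an equilibrium.
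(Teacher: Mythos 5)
Your overall architecture coincides with the paper's proof: take $(M,N)$ from the preceding existence theorems (via Proposition \ref{MJequivMN}), define $p$ and $k$ by the Ansatz (\ref{pst})--(\ref{kst}), verify that $(u^*,X^*,p,k)$ solves (\ref{fbsde}) by reversing the formal derivation, check (\ref{Lambdacond}), and invoke Theorem \ref{main}. Your observation that uniqueness in case (iii) needs a separate remark (any positive solution of (\ref{MJequation}) is bounded away from zero on $[0,T]$ and hence solves a truncated, locally Lipschitz system (\ref{MJequationtr}), forcing coincidence) is a legitimate patch, since the singular-case theorem in the paper asserts only existence. The boundedness discussion for $\alpha,\beta$ and the FBSDE verification are also in line with the paper.

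However, there is a gap precisely at the step you yourself single out as the delicate one. Your argument for $\lim_{s\downarrow t}\Et{\Lambda(s;t)}=\Lambda(t;t)=0$ rests on the claim that $B,C,D,\sigma,R$ (and therefore $\alpha$, $\beta$ and $s\mapsto\Et{u^*_s}$) are \emph{continuous} in $s$. Nothing in the standing assumptions grants this: throughout the paper these coefficients are only essentially bounded, measurable (deterministic, in Section 4) functions of time, and the Riccati solutions are then merely absolutely continuous. With discontinuous coefficients, the individual terms of $\Et{\Lambda(s;t)}$ need not converge as $s\downarrow t$, so termwise passage to the diagonal value $\Lambda(t;t)$ is not justified. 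The paper's proof avoids continuity altogether: substituting (\ref{pst}), (\ref{kst}) and (\ref{ustars}) into $\Lambda$ and using the definition of $\alpha,\beta$, all terms carrying point values of $\Phi$, $\sigma$ and the coefficients cancel identically, leaving
\[
\Lambda(s;t)=N_s\left[X^*_s-\Et{X^*_s}\right]B_s+\Gamma^{(1)}_s\left(X^*_s-X^*_t\right)B_s ,
\]
whence $\Et{\Lambda(s;t)}=\Gamma^{(1)}_s\Et{X^*_s-X^*_t}B_s$. This tends to zero using only the boundedness of $\Gamma^{(1)}B$, the path continuity of $X^*$, and conditional dominated convergence (with dominating variable $\sup_{s}|X^*_s|\in L^2$) --- no regularity of the coefficients in $s$ is needed. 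As written, your proof establishes the theorem only under an additional continuity hypothesis on the data; the repair is exactly this cancellation computation, after which the limit in (\ref{Lambdacond}) follows for the class of coefficients the theorem actually covers.
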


\begin{proof}
Define $p(\cdot;\cdot)$ and $ k(\cdot;\cdot)$
by (\ref{pst}) and (\ref{kst}) respectively. It is straightforward  to check that $(u^*_\cdot, X^*_\cdot, p(\cdot;\cdot), k(\cdot;\cdot))$ satisfies the system of SDEs (\ref{fbsde}).

In all the three cases,  we can  check that
$\alpha_s$ and $\beta_s$ in (\ref{ustars}) are both uniformly bounded, hence $u^*\in L_\cF^2(0,T; \R^l)$
and $X^*_s\in L^2(\Omega;\, C(0,\, T; \, \R))$.

Finally, denote $\Lambda(s;t)=R_su^*_s+p(s;t)B+D_s'k(s;t)$. By plug $p, k, u^*$ defined in (\ref{pst}), (\ref{kst}) and (\ref{ustars}) into $\Lambda$,  we have
\begin{eqnarray*}
\Lambda(s;t)
&=&R_su^*_s+(M_sX^*_s -N_s\mathbb E_t[X^*_s] -\Gamma^{(1)}_s X_t^*+\Phi_s)B_s
+M_sD_s'[C_sX^*_s+D_su^*_s+\sigma_s]\\
&=&(R_s+M_sD_s'D_s)u^*_s+(B_s+D_s'C_s)M_sX^*_s
       -N_s\Et{X^*_s}B_s-\Gamma^{(1)}_sX^*_tB_s\\
       &&+(\Phi_sB_s+M_sD_s'\sigma_s)\\
 &=&-[(M_s-N_s-\Gamma^{(1)}_s)B_s+M_sD_s'C_s]X^*_s-\Phi_sB_s-M_sD_s'\sigma_s\\
&&+(B_s+D_s'C_s)M_sX^*_s-N_s\Et{X^*_s}B_s-\Gamma^{(1)}_sX^*_tB_s+(\Phi_sB_s+M_sD_s'\sigma_s)\\
 &=&(N_s+\Gamma^{(1)}_s)X^*_sB_s-N_s\Et{X^*_s}B_s-\Gamma^{(1)}_sX^*_tB_s\\
&=&N_s[X^*_s-\Et{X^*_s}]B_s+\Gamma^{(1)}_s(X^*_s-X^*_t)B_s.
\end{eqnarray*}
Clearly $\Lambda$ satisfies the first condition in (\ref{Lambdacond}). Furthermore,
we have
 $$\lim_{s\downarrow t} \Et{|X^*_s-\Et{X^*_s}|}=0,\qquad \mbox{ and }
 \lim_{s\downarrow t} \Et{|X^*_s-X^*_t|}=0;$$
  hence $\Lambda$ satisfies the second condition in (\ref{Lambdacond}).

By Theorem \ref{main}, $u^*$ is an equilibrium.
\end{proof}

\begin{remark}
{\rm If  $\mu_1\ge 0$ (e.g. in the mean--variance model to be studied subsequently), then  $\Gamma^{(1)}_t=\mu_1 e^{\int_t^TA_sds}\ge 0$.
With this condition, the first case and the third case in Theorem \ref{detmain} can be simplified as
\begin{itemize}
\item [(i')]$R-\delta I\succeq 0$ for some $\delta>0$, and $B=\lambda D'C$ for some $\lambda\ge 0$;
\item [(iii')] $R\equiv 0$, $D'D-\delta I\succeq 0$ for some $\delta>0$, and $ Q+\Gamma^{(1)} B'(D'D)^{-1}D'C\ge 0$.
\end{itemize}}

\end{remark}

\section{Mean-Variance Equilibrium Strategies in Complete Market}\label{random-case}

In this section, we study the continuous-time Markowitz's mean--variance portfolio selection model in a complete market.
The problem is inherently time inconsistent due to the variance term. Moreover, as in \cite{BMZ} we consider a state-dependent
mean expectation. Hence there are two different sources of time inconsistency. The definition of equilibrium strategies
is in the sense of open-loop, which is different from the feedback one in \cite{BM,BMZ}.

The model is mathematically a special case of the general LQ problem formulated earlier in this paper, with $n=1$ naturally. However,
some coefficients are allowed to be random; so it is not a direct application of the previous section. Indeed the analysis in
this section is much more involved due to the randomness of the coefficients.

For each $t\in [0, T)$, consider a wealth-portfolio
process $(X_t, \pi_t)$ satisfying the wealth equation
\begin{equation}\label{wealth}
 \left\{\begin{array}{l}
         dX_s=r_s X_sds+(\mu_s-r_s\id)'\pi_sds+\pi_s'\sigma_s dW_s,\qquad s\in [t,T],\\
     X_t=x_t,
        \end{array}
 \right.
\end{equation}
where $r\in  L_\cF^{\infty}(0,T; \R)$ is the interest rate process,
$\mu\in L_\cF^{\infty}(0,T; \R^d)$ and $ \sigma \in L_\cF^{\infty}(0,T; \R^{d\times d})$ are the drift rate vector
and volatility processes of risky assets respectively. We assume throughout that  $\sigma_s\sigma_s'-\varepsilon I\succeq 0$
for some $\varepsilon>0$ to ensure the completeness of the market .

Denote $\theta_t=\sigma_t^{-1}(\mu_t-r_t\id), u_t=\sigma_t'\pi_t$. Then
the wealth equation is equivalent to the equation of $(X_t ,u_t)$
\begin{equation}\label{wealth2}
 \left\{\begin{array}{l}
         dX_s=r_s X_sds+\theta_s'u_sds+u_s'dW_s,\qquad s\in [t,T],\\
     X_t=x_t.
        \end{array}
 \right.
\end{equation}

We interchangeably call $\pi$ and $u$ as (trading) strategies. It follows from our assumptions on $\theta$ that 
$\pi\in L_\cF^{2}(0,T; \R)$ if and only if $u\in L_\cF^{2}(0,T; \R)$. 
The objective of a mean-variance portfolio choice model  at time $t\in [0, T)$ is to achieve a balance between
conditional variance and conditional expectation of terminal wealth; namely, to choose a strategy $u$ so as to minimize
\begin{eqnarray}\label{mv-obj}
J(t, x_t; u)&\trieq &\frac{1}{2}Var_t(X_T)-(\mu_1 x_t+\mu_2) \mathbb E_t[X_T]\\
&=&\frac{1}{2}\left(\mathbb E_t[X_T^2]-(\mathbb E_t[X_T])^2\right)-(\mu_1 x_t+\mu_2) \mathbb E_t[X_T]\nonumber
\end{eqnarray}
with $\mu_1\ge 0$.
Here we insist that the weight between the conditional variance (as a risk measure) and the
conditional expectation should depend on the current wealth level, the reason having been elaborated in \cite{BMZ}.

When  the market parameters $r$ and $\theta$ are both deterministic, the problem is a special case of the one studied in
Section \ref{Deterministic-case}. In this section, we will find the equilibrium strategies for the model
where the  interest rate $r$ is deterministic but $\theta$ is  allowed to be random.

The problem (\ref{wealth}) --  (\ref{mv-obj}) is clearly a special case of LQ problem  (2.2) -- (2.3) with $n=1$.
The FBSDE (\ref{fbsde}) specializes  to
\begin{equation}\label{fbsderand}
\left\{
\begin{array}{l}
dX^*_s=[r_s X^*_s+\theta_s' u^*_s]ds+ (u^*_s)'dW_s,\quad X_0^*=x_0,\\
dp(s;t)=-r_s p(s;t)ds+k(s;t)'dW_s,\\
p(T;t)=X^*_T-\mathbb E_t[X^*_T]-\mu_1X^*_t-\mu_2,
\end{array}\right.
\end{equation}
and the process $\Lambda(s;t)$ in condition (\ref{Lambdacond}) is
$$\Lambda(s;t)=p(s;t)\theta_s+k(s;t).$$

\subsection{Formal Derivation}
As before, let us look for a solution in the form
\begin{equation}\label{prand}
p(s;t)=M_s X_s^*- \Gamma_s^{(1)} X_t^*+\Gamma_s^{(2)} - \mathbb E_t[N_s X_s^*+\Gamma_s^{(3)}] ,
\end{equation}
where $(M,U)$, $(N,V)$, $(\Gamma^{(1)}, \gamma^{(1)})$,  $(\Gamma^{(2)}, \gamma^{(2)})$ and $(\Gamma^{(3)}, \gamma^{(3)})$ are solutions of the following BSDEs:
\begin{equation}\label{5bsde}
\left\{
\begin{array}{rcl}
dM_s&=&-F_{M,U}ds+U_s 'dW_s,\quad M_T=1;\\
dN_s&=&-F_{N,V}ds+V_s' dW_s,\quad N_T=1;\\
d\Gamma^{(1)}_s&=&-F^{(1)}ds+(\gamma^{(1)}_s)' dW_s,\quad \Gamma^{(1)}_T=\mu_1;\\
d\Gamma^{(2)}_s&=&-F^{(2)}ds+(\gamma^{(2)}_s)'dW_s,\quad \Gamma^{(2)}_T=-\mu_2;\\
d\Gamma^{(3)}_s&=&-F^{(3)}ds+(\gamma^{(3)}_s)'dW_s,\quad \Gamma^{(3)}_T=0.
\end{array}\right.
\end{equation}

It is an easy exercise to obtain
\begin{eqnarray*}
d[N_sX^*_s]&=&[rNX^*+N\theta'u^*-X^*F_{N,V}+V'u^*]ds+[Nu^*+X^*V]'dW_s,\\
d \mathbb E_t[N_sX^*_s]&=&\mathbb E_t[rNX^*+N\theta'u^*-X^*F_{N,V}+V'u^*]ds,\\
d[M_sX^*_s]&=&[rMX^*+M\theta' u^*-X^*F_{M,U}+U'u^*]ds+[Mu^*+X^*U]'dW_s.
\end{eqnarray*}

Applying Ito's formula to $p(s;t)=M_s X_s^* + \Gamma_s^{(2)} - \mathbb E_t[N_s X_s+\Gamma_s^{(3)}] -\Gamma^{(1)}_s X^*_t$ and comparing the $dW_s$
term in the second equation of (\ref{fbsderand}), we get
\begin{equation}\label{krand}
k(s;t)=X_s^*U_s+M_s u_s^* +\gamma^{(2)}_s-\gamma^{(1)}_sX^*_t.
\end{equation}
Putting the expressions of $p$ and $k$ into the formal
condition $\Lambda(s;s)=0$, we obtain
\begin{eqnarray*}
 u^*_s&=&-M_s^{-1}\left[\left(\theta_s(M_s-N_s-\Gamma^{(1)}_s)+U_s-\gamma^{(1)}_s\right)X^*_s+\theta_s
 (\Gamma^{(2)}_s-\Gamma^{(3)}_s)+\gamma^{(2)}_s\right]\\
&=&\alpha_s X_s^* + \beta_s,
\end{eqnarray*}
where
$$\alpha_s\trieq -M_s^{-1}\left(\theta_s (M_s-N_s-\Gamma^{(1)}_s)+U_s-\gamma^{(1)}_s)\right), \quad \beta_s\trieq -M_s^{-1}\left(\theta_s (\Gamma^{(2)}_s-\Gamma^{(3)}_s)
+\gamma^{(2)}_s\right).$$

Applying again Ito's formula to $p$ and using the above expression of $u$, we deduce
\begin{eqnarray*}
 dp(s;t)&=& [-F_{M,U} X^*_s +r_s M_s X^*_s+(\theta_s  M_s +U_s ) (\alpha X^*_s + \beta_s)-F^{(2)}+X^*_t F^{(1)}]ds\\
& & +\mathbb E_t[F_{N,V} X^*_s -r_s N_s X^*_s-(\theta_s  N_s +V_s ) (\alpha X_s + \beta_s)+F^{(3)}]ds+k(s,t)'dW_s,
\end{eqnarray*}
while the second equation in (\ref{fbsderand}) gives
$$dp(s;t)=\{ -r_sM_sX^*_s+r_s\Gamma^{(1)}_sX^*_t -r_s\Gamma^{(2)}_s+r_s\mathbb E_t[N_sX^*_s+\Gamma^{(3)}_s] \}ds+k(s;t)'dW_s.$$
Comparing the corresponding terms, we obtain (again we supress the subscripts $s\in [t,T]$):
\begin{eqnarray*}\label{5bsdecoef}
 F_{M,U}&=& 2rM+(\theta M+U)'\alpha;\\
F_{N,V}&=& 2rN+(\theta N+V)'\alpha;\\
F^{(1)}&=&r\Gamma^{(1)};\\
F^{(2)}&=&r\Gamma^{(2)}+(\theta M+U)'\beta;\\
F^{(3)}&=&r\Gamma^{(3)} + (\theta N+V)'\beta.
\end{eqnarray*}

\subsection{Solution to the BSDEs (\ref{5bsde}) }

It now suffices to solve the  BSDEs (\ref{5bsde}). Its third equation  can be easily solved, whose solution is
$$\Gamma^{(1)}_t=\mu_1e^{\int_t^Tr_sds}, \; \gamma^{(1)}_t=0.$$
Noting that the first two equations are identical,
 we conclude that
$$M=N, \quad U=V.$$
Then
$$F^{(2)}-F^{(3)}=r(\Gamma^{(2)}-\Gamma^{(3)}).$$
By the last two equations in (\ref{5bsde}), we have
$$\Gamma_s^{(2)}-\Gamma_s^{(3)}=-\mu_2 e^{\int_s^T r_t dt}\trieq \Gamma_s.$$

To proceed, let us recall some facts about BMO martingales; see Kazamaki \cite{Kazamaki}.
The process $Z\cdot W\trieq\int_0^\cdot Z_s'dW_s$ is a BMO martingale if and only if there exists a constant $C>0$ such that
$$\mathbb E\left[\int_\tau^T |Z_s|^2ds\Big| {\cal F}_\tau \right] \le C$$
for all stopping times $\tau\le T$. For every such $Z$, the stochastic exponential of $Z\cdot W$ denoted by ${\cal E}(Z\cdot W)$ is a positive martingale;
and for any $p>1$, there exists a constant $C_p>0$ such that
$\E{\left(\int_\tau^T|Z_s|^2ds\right)^p\Big|\cF_\tau}\le C_p$ for any stopping time $\tau\le T$.
Moreover, if $Z\cdot W$ and $V\cdot W$ are both BMO martingales, then under the probability measure $\Q$
defined by $\frac{d\Q}{d\p}={\cal E}_T(V\cdot W)$, $W^\Q_t\trieq W_t-\int_0^tV_sds$ is a standard Brownian motion, and $Z\cdot W^\Q$ is a BMO martingale.

Now plug  the definition of $\alpha$ into the first equation in (\ref{5bsde}), we get the BSDE satisfied by $(M,U)$:
\begin{equation}\label{MU}
\left\{\begin{array}{l}
        dM_s=-(2r_s M_s-U_s'\theta_s +\Gamma^{(1)}_s|\theta_s|^2- M_s^{-1}|U_s|^2+\Gamma^{(1)}_s M_s^{-1}U_s'\theta_s)ds+U_s' dW_s,\\
M_T=1.\end{array}\right.
\end{equation}

This is a type of indefinite stochastic Riccati equation due to the presence of $M^{-1}$ in the driver; however it is different from the one studied in
\cite{HuZhou}.

\begin{proposition}
BSDE (\ref{MU}) admits a  unique solution $(M,U)\in L_{\cal F}^\infty(0,T;\mathbb R)\times L_{\cal F}^2(0,T;\mathbb R^d)$ satisfying  $M\ge c$ for some  constant $c>0$. Moreover, $U\cdot W$ is a BMO martingale.
\end{proposition}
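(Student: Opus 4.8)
The only non-standard feature of (\ref{MU}) is the singular term $M^{-1}|U|^2$ (and the cross term $\Gamma^{(1)}M^{-1}U'\theta$) in the driver; every coefficient is otherwise bounded, since $r$ is bounded deterministic, $\theta\in L^\infty_\cF(0,T;\R^d)$, and $\Gamma^{(1)}_s=\mu_1e^{\int_s^Tr_u\,du}\ge 0$ is bounded, deterministic and \emph{nonnegative}. My plan is a truncation-plus-a-priori-estimate scheme in the spirit of the deterministic sections. First I replace $M^{-1}$ by $(M\vee c)^{-1}$, obtaining for each threshold $c>0$ a quadratic BSDE whose $z$-quadratic coefficient and remaining data are bounded; by the classical theory of quadratic BSDEs with bounded terminal value (here the constant $1$) it admits a unique bounded solution $(M^c,U^c)$ with $U^c\cdot W$ a BMO martingale. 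The whole game is then to show two-sided bounds $c_0\le M^c\le C$ with $c_0,C>0$ \emph{independent of $c$}: choosing $c\le c_0$ makes the truncation inactive, so $(M^c,U^c)$ solves (\ref{MU}).

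The \emph{upper bound} is the easy direction: since $-M^{-1}|U|^2\le 0$, deleting it turns the driver into a supersolution generator that is affine in $M$ and affine in $U$ (with coefficient $\theta(\Gamma^{(1)}M^{-1}-1)$). Using that $U^c\cdot W$ is BMO, I absorb the $U$-linear terms by a Girsanov change of measure $\Q$ under which $W^\Q$ is Brownian and the relevant stochastic exponential is a genuine martingale (the BMO facts recalled before the Proposition, see \cite{Kazamaki}); representing $M^c_\tau$ as a conditional $\Q$-expectation of $1$ plus the bounded remaining drift and applying a stochastic Gronwall inequality yields $M^c\le C$. The crux, and the step I expect to be the main obstacle, is the \emph{uniform lower bound} $M^c\ge c_0>0$, precisely because this is where the singularity bites: the negative quadratic term pushes $M$ toward $0$, and $c_0$ must not depend on the truncation. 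My key device here is the reciprocal substitution $N:=1/M^c$. A direct It\^o computation shows that the gradient-quadratic term \emph{cancels}, so that, with $V:=-M^{-2}U^c$, $N$ solves $dN_s=[\,2r_sN_s+V_s'\theta_s+\Gamma^{(1)}_sN_s^2|\theta_s|^2-\Gamma^{(1)}_sN_sV_s'\theta_s\,]ds+V_s'dW_s$, $N_T=1$, which has \emph{no} quadratic term in $V$ and only the Riccati-type term $\Gamma^{(1)}_sN_s^2|\theta_s|^2$ of favourable sign — nonnegativity here is exactly where the hypothesis $\mu_1\ge0$ (hence $\Gamma^{(1)}\ge0$) enters. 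Deleting this good-sign term and again using a BMO-justified Girsanov change of measure to remove the $V$-linear terms gives a uniform upper bound on $N$, i.e. the desired uniform lower bound $M^c\ge c_0>0$.

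The technical heart of the second step is the bootstrapping needed to legitimise these measure changes: the integrands involve $\theta(1-\Gamma^{(1)}N)$, whose BMO norm must be controlled uniformly in $c$, which I would obtain by running the two estimates in tandem on the a priori bounded truncated solutions before letting the bounds become $c$-independent. Once $c_0\le M\le C$ is in hand, the BMO claim for $U$ follows by applying It\^o to $M$ on $[\tau,T]$, isolating $\E{\int_\tau^T M^{-1}|U|^2\,du\,\big|\,\cF_\tau}$ from the BSDE, and absorbing the $U$-linear terms with the help of $M^{-1}\ge C^{-1}$ and the boundedness of $\Gamma^{(1)}M^{-1}$; since $M^{-1}|U|^2\ge C^{-1}|U|^2$, this controls $\E{\int_\tau^T|U|^2\,du\,\big|\,\cF_\tau}$ uniformly in $\tau$. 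Finally, uniqueness among bounded solutions with $M\ge c_0$ comes from subtracting two such solutions, linearising the driver — all resulting coefficients are bounded thanks to the two-sided bounds — and applying Girsanov under the measure generated by the associated BMO martingale to conclude that the difference vanishes.
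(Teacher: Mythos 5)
Your overall architecture for existence (truncate $M^{-1}$ to $(M\vee c)^{-1}$, solve a quadratic BSDE for each $c$, prove a $c$-independent lower bound, then undo the truncation) is indeed the paper's scheme, and your upper-bound step is harmless. But your key device for the crucial lower bound --- the reciprocal $N:=1/M^{c}$ applied to the \emph{truncated} solution --- does not work as stated, for three compounding reasons. First, you cannot define $N$ or apply It\^o to it before knowing $M^{c}>0$, and positivity of the truncated solution is nowhere established in your plan. Second, the advertised cancellation of the gradient-quadratic term is exact only on $\{M^{c}\ge c\}$: It\^o produces $+|U^{c}|^{2}/(M^{c})^{3}$ while the truncated driver contributes $-|U^{c}|^{2}/\bigl((M^{c})^{2}(M^{c}\vee c)\bigr)$, so a residual survives precisely on the set you are trying to rule out. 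Third, and decisively, the Girsanov change of measure you invoke must absorb the drift $V'\theta\,(1-\Gamma^{(1)}N)$, whose BMO property requires a bound on $N=1/M^{c}$; this is circular, and your proposed bootstrap does not break the circle, because quadratic-BSDE theory bounds $M^{c}$ from \emph{above}, not away from zero. The paper avoids all of this: in (\ref{MUtrun2}) it absorbs \emph{every} $U$-dependent term of the truncated driver --- including the quadratic one, rewritten as $U'\bigl(U/(M\vee c)\bigr)$, whose coefficient generates a BMO martingale because $1/(M\vee c)\le 1/c$ and $U^{c}\cdot W$ is BMO --- into a single Girsanov drift, and then reads off $M^{c}_{s}=\mathbb{E}^{\Q}_{s}\bigl[e^{2\int_{s}^{T}r_{t}dt}+\int_{s}^{T}\Gamma^{(1)}_{v}e^{2\int_{s}^{v}r_{t}dt}|\theta_{v}|^{2}dv\bigr]\ge e^{-2\|r\|_{\infty}T}$, which delivers positivity and the $c$-free lower bound simultaneously, with no reciprocal and no prior positivity argument needed.

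Your uniqueness step has a genuine gap as well: the claim that after subtracting two solutions ``all resulting coefficients are bounded thanks to the two-sided bounds'' is false. Writing $|U^{(1)}|^{2}/M^{(1)}-|U^{(2)}|^{2}/M^{(2)}=(U^{(1)}+U^{(2)})'\bar{U}/M^{(1)}-\bigl(|U^{(2)}|^{2}/(M^{(1)}M^{(2)})\bigr)\bar{M}$, the coefficient of $\bar{M}$ contains $|U^{(2)}|^{2}$, which is only BMO-integrable, not bounded; a plain Girsanov-plus-Gronwall argument then does not close (John--Nirenberg gives conditional exponential moments only for small multiples of $\int|U^{(2)}|^{2}$, and the factor $1/(M^{(1)}M^{(2)})$ is not small; nor does restricting to $[T-\delta,T]$ help, since $\mathbb{E}_{s}\int_{s}^{T}|U^{(2)}|^{2}dr$ does not shrink with $\delta$). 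This is exactly why the paper performs the reciprocal substitution at the \emph{uniqueness} stage, where it is legitimate: $M\ge c$ is part of the uniqueness class, so $Y=1/M$, $Z=-U/M^{2}$ are well defined and the quadratic term cancels identically, giving (\ref{YZ}); the linearized difference equation (\ref{Y'Z'}) then has coefficients that are either bounded or \emph{linear} in $Z^{(1)}$, which the paper controls via Cauchy--Schwarz and the BMO energy inequality, obtaining a contraction on $[T-\delta,T]$ and iterating backward to $0$. In short, you have assembled the right ingredients but attached them to the wrong steps: the wholesale Girsanov absorption of the $U$-terms is what the existence bound needs, and the reciprocal transformation is what uniqueness needs --- not the other way around.
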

\begin{proof}
 Once again, we will prove the existence by a truncation argument.
Let $c>0$ be a given number to be chosen later. Consider the following quadratic BSDE:
\begin{equation}\label{MUtrun}
\left\{\begin{array}{l}
        dM_s=-\left[2r_s M_s- U_s'\theta_s+\Gamma^{(1)}_s|\theta_s|^2- \frac{|U_s|^2}{M_s\vee c} +\Gamma^{(1)}_s \frac{U_s'\theta}{M_s\vee c} \right] ds+U_s' dW_s,\\
M_T=1.\end{array}\right.
\end{equation}

This BSDE is a standard quadratic BSDE. Hence there exists a solution $(M^{c},U^{c})\in L_{\cF}^\infty(0,T;\R)\times L_{\cF}^2(0,T;\R^d)$
and $U^{c}\cdot W$ is a BMO martingale; see \cite{Koby} and \cite{Morlais}.

We can rewrite the above BSDE as:
\begin{equation}\label{MUtrun2}
\left\{\begin{array}{l}
        dM_s=-(2r_s M_s+\Gamma^{(1)}_s|\theta_s|^2)ds+U_s' [dW_s-(\Gamma^{(1)}_s \frac{1}{M_s\vee c}\theta_s-\theta_s- \frac{1}{M_s\vee c}U_s )ds],\\
M_T=1.\end{array}\right.
\end{equation}

As $(\Gamma^{(1)} \frac{1}{M^c\vee c}\theta-\theta- \frac{1}{M^c\vee c}U^c)\cdot W$ is a BMO martingale, there exists a new probability measure $\mathbb Q$ such that
$$W_t^{\mathbb Q} = W_t-\int_0^t \left(\Gamma^{(1)}_s \frac{1}{M^c_s\vee c}\theta_s-\theta_s- \frac{1}{M^c_s\vee c}U^c_s \right)ds$$
is a Brownian motion under $\mathbb Q$.

Hence,
$$M_s^c=\mathbb E_s^{\mathbb Q}\left[ e^{2\int_s^T r_tdt}+\int_s^T \Gamma^{(1)}_ve^{2\int_s^v r_tdt}|\theta_v|^2 dv\right],$$
from which we deduce that there exists a constant $\eta>0$ independent of $c$ such that $M\ge \eta$. Taking  $c= \eta$, we obtain a solution.

Let us now prove the uniqueness. First we note that if $(M,U)\in L_{\cF}^\infty(0,T;\R)\times L_{\cF}^2(0,T;\R^d)$ is a  solution and there exists $c>0$ such that $M\ge c$, then $U\cdot W$ is a BMO martingale. Let us define
$$Y_s=M_s^{-1},\quad Z_s=-M_s^{-2}U_s.$$
Then $(Y,Z)$ is a solution in  $L_{\cF}^\infty(0,T;\R)\times L_{\cF}^2(0,T;\R^d)   $  of the following BSDE
\begin{equation}\label{YZ}
\left\{\begin{array}{l}
        dY_s=-[-2r Y_s-Z_s'\theta_s-\Gamma^{(1)}_s|\theta_s|^2 Y_s^2+ \Gamma^{(1)}_s Y_sZ_s'\theta]ds+Z_s' dW_s,\\
Y_T=1.\end{array}\right.
\end{equation}
Moreover, $Z\cdot W$ is a BMO martingale.

It suffices to prove uniqueness of solution to BSDE (\ref{YZ}). For this, let $(Y^{(1)},Z^{(1)})$ and $(Y^{(2)},Z^{(2)})$ be two solutions in  $L_{\cF}^\infty(0,T;\R)\times L_{\cF}^2(0,T;\R^d)   $ such that
$Z^{(1)}\cdot W$ and $Z^{(2)}\cdot W$ are BMO martingales. Set
$$\bar{Y}=Y^{(1)}-Y^{(2)},\quad \bar{Z}=Z^{(1)}-Z^{(2)}.$$
Then
\begin{equation}\label{Y'Z'}
\left\{\hspace{-0.2cm}\begin{array}{l}
        d\bar{Y}_s=-[-2r_s \bar{Y}_s- \bar{Z}_s'\theta_s -\Gamma^{(1)}_s |\theta_s |^2(Y^{(1)}_s+Y^{(2)}_s)\bar{Y}_s
        +\Gamma_s^{(1)}\theta_s ' (\bar{Y}_sZ^{(1)}_s
+Y^{(2)}_s \bar{Z}_s)]ds+\bar{Z}_s' dW_s,\\
\bar{Y}_T=0.\end{array}\right.
\end{equation}

Applying Ito's formula to $|\bar{Y_s}|^2$ and taking conditional expectation, we deduce (where $C>0$ is a constant which may change from line to line).
\begin{eqnarray*}
& &|\bar{Y_s}|^2+\mathbb E_s\left[\int_s^T |\bar{Z}_r|^2 dr\right]\\
&\le & C \mathbb E_s \left[\int_s^T |\bar{Y}_r| (|\bar{Y}_r|+ |\bar{Z}_r| +|Z^{(1)}_r||\bar{Y}_r|)dr\right]\\
&\le& C \mathbb E_s\left[\int_s^T |\bar{Y}_r|^2 dr\right]+\frac{1}{2} \mathbb E_s\left[\int_s^T |\bar{Z}_r|^2 dr\right]+C\sqrt{\mathbb E_s\left[\int_s^T |Z^{(1)}_r|^2 dr\right] \mathbb E_s\left[\int_s^T |\bar{Y}_r|^4 dr\right]}\\
&\le& C \mathbb E_s\left[\int_s^T |\bar{Y}_r|^2
dr\right]+\frac{1}{2} \mathbb E_s\left[\int_s^T |\bar{Z}_r|^2
dr\right]+C\sqrt{\mathbb E_s\left[\int_s^T |\bar{Y}_r|^4
dr\right]}.
\end{eqnarray*}
Let us assume that $s\in [T-\delta,T]$. Then by setting
$$\bar{Y}^*_{T-\delta,T}=\|\bar{Y}_\cdot\|_{L_\cF^\infty(T-\delta, T; \R)},$$
we obtain
$$|\bar{Y}_s|^2\le C (\delta+\delta^{1/2}) |\bar{Y}^*_{T-\delta,T}|^2.$$
Hence,
$$|\bar{Y}^*_{T-\delta,T}|^2\le C\delta^{1/2}|\bar{Y}^*_{T-\delta,T}|^2.$$
By taking  $\delta$ sufficiently small, we deduce that $\bar{Y}^*_{T-\delta,T}=0$. We conclude the proof of uniqueness by continuing on $[T-2\delta,T-\delta],\dots$, until time 0 is reached.
\end{proof}

Then we consider the BSDE satisfied by $(\Gamma^{(2)},\gamma^{(2)})$:
\begin{equation}\label{gamma}
\left\{\begin{array}{l}
        d\Gamma^{(2)}_t=-\left[r_t\Gamma^{(2)}_t-\left(\theta_t+\frac{U_t}{M_t}\right)'\gamma^{(2)}_t-\left(|\theta_t|^2+\frac{U_t'\theta_t}{M_t}\right)\Gamma_t \right]dt+(\gamma^{(2)}_t )'dW_t,\\
\Gamma^{(2)}_T=-\mu_2.
\end{array}\right.
\end{equation}

\begin{proposition}
 BSDE (\ref{gamma}) admits a unique solution $(\Gamma^{(2)},\gamma^{(2)})\in L^\infty_{{\cal F}}(0,T;\mathbb R)\times
L^2_{{\cal F}}(0,T;\mathbb R^d)$. Moreover, $\gamma^{(2)}\cdot W$ is a BMO martingale.
\end{proposition}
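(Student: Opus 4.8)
The plan is to exploit the fact that (\ref{gamma}) is a \emph{linear} BSDE in the unknown pair $(\Gamma^{(2)},\gamma^{(2)})$, whose only non-classical feature is that the coefficient $\theta_t+U_t/M_t$ multiplying $\gamma^{(2)}_t$, and the inhomogeneous term $(|\theta_t|^2+U_t'\theta_t/M_t)\Gamma_t$, are both unbounded (they involve $U$). The whole argument is therefore carried by the BMO calculus recalled just before the study of (\ref{MU}). First I would record that $\beta_t\trieq\theta_t+U_t/M_t$ generates a BMO martingale under $\mathbb{P}$: $\theta$ is bounded, $M\ge c>0$ by the previous proposition, and $U\cdot W$ is BMO, so $|U/M|\le|U|/c$ makes $(U/M)\cdot W$ BMO and hence $\beta\cdot W=\theta\cdot W+(U/M)\cdot W$ BMO. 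Consequently $\mathcal{E}(\beta\cdot W)$ is a genuine positive martingale, defining $\mathbb{Q}$ by $\frac{d\mathbb{Q}}{d\mathbb{P}}=\mathcal{E}_T(\beta\cdot W)$, under which $W^{\mathbb{Q}}_t=W_t-\int_0^t\beta_s\,ds$ is a Brownian motion. Rewriting the driver as $-\beta_s'\gamma^{(2)}_s\,ds+(\gamma^{(2)}_s)'dW_s=(\gamma^{(2)}_s)'dW^{\mathbb{Q}}_s$ turns (\ref{gamma}) into the affine BSDE $d\Gamma^{(2)}_s=-[r_s\Gamma^{(2)}_s-g_s]\,ds+(\gamma^{(2)}_s)'dW^{\mathbb{Q}}_s$, with $g_s\trieq(|\theta_s|^2+U_s'\theta_s/M_s)\Gamma_s$ and $\Gamma^{(2)}_T=-\mu_2$.

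Next, discounting by $\Phi_s=e^{\int_0^s r_u du}$ shows $\Phi_s\Gamma^{(2)}_s+\int_0^s\Phi_u g_u\,du$ is a $\mathbb{Q}$-martingale, which forces the solution to be the explicit conditional expectation
\[
\Gamma^{(2)}_t=\mathbb{E}^{\mathbb{Q}}_t\Big[-\mu_2 e^{\int_t^T r_s ds}-\int_t^T e^{\int_t^s r_u du}\,g_s\,ds\Big].
\]
The crux is to show this is well defined and bounded. Using $M\ge c$ and the boundedness of $\theta,\Gamma,r$ one gets $|g_s|\le C(1+|U_s|)$, so by Cauchy--Schwarz $\mathbb{E}^{\mathbb{Q}}_t[\int_t^T|g_s|\,ds]\le C+C(T-t)^{1/2}\,\mathbb{E}^{\mathbb{Q}}_t[\int_t^T|U_s|^2\,ds]^{1/2}$. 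Here I would invoke the Kazamaki stability quoted in the text: since $U\cdot W$ and $\beta\cdot W$ are $\mathbb{P}$-BMO, $U\cdot W^{\mathbb{Q}}$ is $\mathbb{Q}$-BMO, whence $\mathbb{E}^{\mathbb{Q}}_t[\int_t^T|U_s|^2\,ds]$ is bounded uniformly in $(t,\omega)$. This simultaneously yields $\int_0^T\Phi_s g_s\,ds\in L^2(\mathbb{Q})$ (the energy inequality for BMO integrands even gives all $L^p$, $p>1$), so $\Gamma^{(2)}$ is essentially bounded and the $\mathbb{Q}$-martingale above admits, by martingale representation over the Brownian filtration, an integrand that identifies $\gamma^{(2)}\in L^2_{\cF}$ and makes the pair solve (\ref{gamma}).

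Finally I would prove $\gamma^{(2)}\cdot W$ is BMO by the standard energy estimate: applying It\^o to $|\Gamma^{(2)}|^2$ under $\mathbb{P}$ and taking $\mathbb{E}_s[\cdot]$, the cross term $-2\Gamma^{(2)}_u\beta_u'\gamma^{(2)}_u$ is absorbed by Young's inequality as $\le\frac12|\gamma^{(2)}_u|^2+2|\Gamma^{(2)}_u|^2|\beta_u|^2$, after which the bound $|\Gamma^{(2)}|\le C$, the BMO bound $\mathbb{E}_s[\int_s^T|\beta_u|^2\,du]\le C$, and the $L^1$-estimate on $g$ give $\mathbb{E}_s[\int_s^T|\gamma^{(2)}_u|^2\,du]\le C$. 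Uniqueness is then immediate from linearity: the difference of two solutions solves the homogeneous equation, which under $\mathbb{Q}$ reads $d\bar\Gamma_s=-r_s\bar\Gamma_s\,ds+\bar\gamma_s'dW^{\mathbb{Q}}_s$ with $\bar\Gamma_T=0$; since $\bar\gamma\cdot W^{\mathbb{Q}}$ is a genuine ($\mathbb{Q}$-BMO) martingale, $\Phi_t\bar\Gamma_t=\mathbb{E}^{\mathbb{Q}}_t[\Phi_T\bar\Gamma_T]=0$, forcing $\bar\Gamma\equiv0$ and then $\bar\gamma\equiv0$. The one genuine obstacle is controlling the unbounded data $\beta$ and $g$, which precludes classical Lipschitz BSDE theory; everything reduces to the legitimacy of the Girsanov change of measure (valid because $\beta\cdot W$ is BMO) together with the preservation of the BMO property of $U\cdot W$ under that change, which is precisely the stability statement the authors recalled.
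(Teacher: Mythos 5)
Your Girsanov change of measure points the wrong way, and as written the key cancellation fails. Writing $\beta_t\trieq\theta_t+U_t/M_t$ and $g_t\trieq(|\theta_t|^2+U_t'\theta_t/M_t)\Gamma_t$ as in your proposal, equation (\ref{gamma}) reads
\begin{equation*}
d\Gamma^{(2)}_t=\left[-r_t\Gamma^{(2)}_t+\beta_t'\gamma^{(2)}_t+g_t\right]dt+(\gamma^{(2)}_t)'dW_t
=\left[-r_t\Gamma^{(2)}_t+g_t\right]dt+(\gamma^{(2)}_t)'\left(dW_t+\beta_t\,dt\right),
\end{equation*}
so the process that must become a $\mathbb{Q}$-Brownian motion is $W+\int_0^\cdot\beta_s\,ds$, which by the Kazamaki convention recalled in the paper requires $\frac{d\mathbb{Q}}{d\mathbb{P}}=\mathcal{E}_T\bigl(-\beta\cdot W\bigr)$ --- this is exactly the measure the paper uses. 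With your choice $\frac{d\mathbb{Q}}{d\mathbb{P}}=\mathcal{E}_T(\beta\cdot W)$ one has $dW_t=dW^{\mathbb{Q}}_t+\beta_t\,dt$, and substitution yields $d\Gamma^{(2)}_t=[-r_t\Gamma^{(2)}_t+2\beta_t'\gamma^{(2)}_t+g_t]\,dt+(\gamma^{(2)}_t)'dW^{\mathbb{Q}}_t$: the unbounded term doubles instead of vanishing, so your explicit conditional-expectation formula does not solve (\ref{gamma}) under that measure. The slip is purely one of sign (all BMO assertions are invariant under $\beta\mapsto-\beta$), so every subsequent step survives once the exponent is flipped.

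Modulo that fix, your route is sound but genuinely different from the paper's in two places. For existence, the paper invokes the stochastic-Lipschitz BSDE result of Briand--Confortola \cite{BC} to get a unique $L^2\times L^2$ solution and only then uses Girsanov to read off boundedness; you instead build the solution by hand (conditional expectation under $\mathbb{Q}$ plus martingale representation), which is more self-contained. For the BMO property, the paper works under $\mathbb{Q}$, where the BSDE identity immediately gives $\mathbb{E}^{\mathbb{Q}}_t[\int_t^T|\gamma^{(2)}_s|^2ds]\le C$, and then transfers back to $\mathbb{P}$ by the stability lemma; you run an energy estimate on $|\Gamma^{(2)}|^2$ directly under $\mathbb{P}$, which also closes because $\mathbb{E}_s[\int_s^T|\beta_u|^2du]$ and $\mathbb{E}_s[\int_s^T|g_u|du]$ are uniformly bounded by the $\mathbb{P}$-BMO property of $\beta\cdot W$ and $U\cdot W$ (you should localize the stochastic integral, since $\gamma^{(2)}\in L^2(\mathbb{P})$ is not known a priori for the constructed pair). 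One point in your uniqueness argument needs to be made explicit: you assert that $\bar\gamma\cdot W^{\mathbb{Q}}$ is a genuine $\mathbb{Q}$-BMO martingale, but the solution class only requires $\gamma^{(2)}\in L^2_{\mathcal{F}}(0,T;\mathbb{R}^d)$ under $\mathbb{P}$, which by itself does not survive the measure change. The repair is to observe that your energy estimate applies verbatim to \emph{any} solution in $L^\infty_{\mathcal{F}}\times L^2_{\mathcal{F}}$ (it uses only the boundedness of $\Gamma^{(2)}$ and the BMO bounds on $\beta$ and $U$), so every such solution has a BMO integrand; then the difference is BMO, the stability lemma applies, and the conclusion $\bar\Gamma\equiv0$, $\bar\gamma\equiv0$ follows as you state.
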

\begin{proof}
As $-(\theta+\frac{U}{M})\cdot W)$ is a BMO martingale, it
suffices to apply the result of Section 3 in \cite{BC} to deduce
that BSDE (\ref{gamma}) admits a unique solution
$(\Gamma^{(2)},\gamma^{(2)})\in L^2_{{\cal F}}(0,T;\R)\times L^2_{{\cal F}}(0,T;\R^d)$. Let $\Q$ be the
probability measure defined by $\frac{d\Q}{dP}={\cal
E}_T(-(\theta+\frac{U}{M})\cdot W)$. Then under $\Q$,
$$W^\Q_t=W_t+\int_0^t (\theta_s+M_s^{-1}U_s)ds$$
is a Brownian motion and $U\cdot W^\Q$ is a BMO martingale.
Furthermore,
$$d\Gamma^{(2)}_t=-\left[r_t\Gamma^{(2)}_t-\left(|\theta_t|^2+\frac{U_t'\theta_t}{M_t}\right)\Gamma_t\right]dt+(\gamma_t^{(2)})'
dW^\Q_t,\qquad \Gamma^{(2)}_T=-\mu_2.$$
Hence
$$\Gamma^{(2)}_t=\mathbb E^\Q_t\left[-e^{\int_t^T r_vdv}\mu_2-\int_t^T e^{\int_t^s r_vdv}\Gamma_s\left(|\theta_s|^2+\frac{U'_s\theta_s}{M_s}\right)ds\right].$$
From this we deduce that $\Gamma^{(2)}$ is a bounded process. Moreover, from (\ref{gamma}),
\begin{eqnarray*}
 \mathbb E^\Q_t\left[\int_t^T |\gamma^{(2)}_s|^2ds\right]&=&\mathbb E^\Q_t\left[\left|\int_t^T( \gamma^{2}_s)'dW^\Q_s\right|^2\right]\\
&=&\mathbb
E^\Q_t\left[\left|\Gamma^{(2)}_T-\Gamma^{(2)}_t+\int_t^T
\left[r_s\Gamma^{(2)}_s-\Gamma_s\left(|\theta_s|^2+\frac{U'_s\theta_s}{M_s}\right)\right]ds\right|^2\right].
\end{eqnarray*}
Hence from the last equality, $\gamma^{(2)}\cdot W^\Q$ is a BMO
martingale under $\Q$ and then $\gamma^{(2)}\cdot {W}$ is a BMO
martingale under $\p$.
\end{proof}

\medskip

With $M, U, \gamma^{(2)}$ obtained, we can construct a  (feedback) strategy
\begin{equation}\label{feed}
u^*_s=\alpha_s X_s^*+\beta_s
\end{equation}
where $$\alpha_s\trieq \frac{\Gamma^{(1)}_s\theta_s-U_s}{M_s},\quad
\beta_s\trieq -\frac{\Gamma_s\theta_s+\gamma^{(2)}_s}{M_s}.$$

In order to confirm that the above is indeed an {\it admissible} feedback strategy, we need to prove the following technical
result. Its proof is intriguing in its own right.
\begin{proposition}
Let $X^*$ be the solution to the first equation of (\ref{fbsderand}) where $u^*$ is substituted by (\ref{feed}). Then
$X^*\in L^2_{\cal F}(0,T;C(0,T;\R))$ and $u^*\in L^2_{\cal F}(0,T;\R^d)$.
\end{proposition}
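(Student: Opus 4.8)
The plan is to reduce both assertions to the single maximal bound $\E{\sup_{0\le s\le T}\abs{X^*_s}^2}<\infty$. Indeed, once this is available, the integrability of $u^*$ needs no further moment estimate. Applying It\^o's formula to $\abs{X^*_s}^2$ along the first equation of (\ref{fbsderand}), whose diffusion coefficient is precisely $u^*$, and localising by stopping times $\tau_n\uparrow T$ that turn the stochastic integral into a true martingale, one obtains
\[
\E{\int_0^{\tau_n}\abs{u^*_s}^2ds}=\E{\abs{X^*_{\tau_n}}^2}-\abs{x_0}^2-2\,\E{\int_0^{\tau_n}X^*_s(r_sX^*_s+\theta_s'u^*_s)\,ds}.
\]
Estimating the cross term by $\big(\E\int_0^{\tau_n}\abs{X^*}^2\big)^{1/2}\big(\E\int_0^{\tau_n}\abs{u^*}^2\big)^{1/2}$ and using $\E\int_0^T\abs{X^*}^2\le T\,\E\sup\abs{X^*}^2<\infty$ turns this into a quadratic inequality for $I_n\trieq\E\int_0^{\tau_n}\abs{u^*}^2$ whose coefficients are controlled by $\E\sup\abs{X^*}^2$; it yields a bound on $I_n$ uniform in $n$, and $n\to\infty$ gives $u^*\in L^2_{\cF}(0,T;\R^d)$. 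So everything rests on the maximal $L^2$ bound for $X^*$.

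The key device for that bound is to study not $X^*$ but $\Pi_s\trieq M_sX^*_s$, where $(M,U)$ solves (\ref{MU}). The excerpt already records $d[M_sX^*_s]=[rMX^*+M\theta'u^*-X^*F_{M,U}+U'u^*]ds+[Mu^*+X^*U]'dW_s$; substituting the feedback (\ref{feed}) and the identities $M_s\alpha_s=\Gamma^{(1)}_s\theta_s-U_s$ and $h_s\trieq M_s\beta_s=-(\Gamma_s\theta_s+\gamma^{(2)}_s)$, I expect a cancellation in which every unbounded piece of $\alpha,\beta$ carrying $U$ drops out of the coefficients of $\Pi$, leaving the genuinely linear SDE
\[
d\Pi_s=\big[-r_s\Pi_s+g_s\big]ds+\Big[\tfrac{\Gamma^{(1)}_s\theta_s}{M_s}\,\Pi_s+h_s\Big]'dW_s,\qquad \Pi_0=M_0x_0,
\]
with $g_s=(\theta_s+M_s^{-1}U_s)'h_s$. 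The crucial gain is that the coefficients of $\Pi$, namely $-r$ and $\Gamma^{(1)}\theta/M$, are now \emph{bounded} (since $r,\theta,\Gamma^{(1)}$ are bounded and $M\ge c>0$), while $U$ survives only inside the inhomogeneous drift $g$.

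With bounded coefficients, the maximal estimate for $\Pi$ is in principle routine via Burkholder--Davis--Gundy and Gronwall, and the main obstacle is the integrability of $g$. It contains the product $M_s^{-1}U_s'\gamma^{(2)}_s$ of the two BMO integrands $U$ and $\gamma^{(2)}$, which is in general \emph{not} in $L^2(ds\otimes d\p)$, so an estimate through $\E\int_0^T\abs{g}^2ds$ is unavailable. I will instead control the drift through $\E\big(\int_0^T\abs{g_s}ds\big)^2$. Since $\abs{g_s}\le C(1+\abs{U_s})(1+\abs{\gamma^{(2)}_s})$, Cauchy--Schwarz in time gives $\big(\int_0^T\abs{g}ds\big)^2\le C\big(1+\int_0^T\abs{U}^2ds\big)\big(1+\int_0^T\abs{\gamma^{(2)}}^2ds\big)$, and because $U\cdot W$ and $\gamma^{(2)}\cdot W$ are BMO martingales, the energy inequality recalled earlier guarantees that $\int_0^T\abs{U}^2ds$ and $\int_0^T\abs{\gamma^{(2)}}^2ds$ have finite moments of all orders; a further Cauchy--Schwarz in $\omega$ then yields $\E\big(\int_0^T\abs{g}ds\big)^2<\infty$. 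Likewise $\E\int_0^T\abs{h}^2ds<\infty$, as $\gamma^{(2)}\in L^2$.

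Finally I would bound $\sup_{s\le t}\abs{\Pi_s}$ by the triangle inequality over initial data, drift and martingale parts, apply BDG to the martingale part (its quadratic variation is $\le C\int(\abs{\Pi}^2+\abs{h}^2)$ thanks to the bounded diffusion coefficient of $\Pi$), bound the drift part by $C\big(\int_0^t\abs{\Pi}\big)^2+C\big(\int_0^T\abs{g}\big)^2$, and, after a standard localisation, close with Gronwall applied to $t\mapsto\E\sup_{s\le t}\abs{\Pi_s}^2$. This delivers $\E\sup_s\abs{\Pi_s}^2<\infty$, whence $\E\sup_s\abs{X^*_s}^2\le c^{-2}\E\sup_s\abs{\Pi_s}^2<\infty$ since $M\ge c$, i.e. $X^*\in L^2_{\cF}(0,T;C(0,T;\R))$; the first paragraph then gives $u^*\in L^2_{\cF}(0,T;\R^d)$. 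The heart of the argument, and the only place the BMO theory is genuinely needed, is the passage to $\Pi=MX^*$ (which linearises the dynamics with bounded coefficients) combined with estimating the drift in the $\E(\int\abs{g})^2$ norm rather than the unavailable $\E\int\abs{g}^2$ norm.
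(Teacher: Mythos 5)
Your proof is correct, and it takes a genuinely different route from the paper's. The paper works with the explicit variation-of-constants formula $X^*_t=\rho_t\big(x_0-\int_0^t\rho_s^{-1}\alpha_s'\beta_s ds+\int_0^t\rho_s^{-1}\beta_s dW_s^\theta\big)$, $\rho_t=e^{\int_0^t r_s ds}{\cal E}_t(\alpha\cdot W^\theta)$, and its key step is to apply It\^o's formula to $\ln M$ to obtain the identity $\rho_t=\frac{M_0}{M_t}{\cal E}_t\big(\frac{\Gamma^{(1)}\theta}{M}\cdot W\big)e^{-\int_0^t r_s ds}$; since $\Gamma^{(1)}\theta/M$ is bounded and $M,M^{-1}$ are bounded, $\sup_t\rho_t^{p}$ is integrable for every $p\in\R$ (negative powers included), and the maximal bound on $X^*$ then follows by Cauchy--Schwarz combined with the all-order moments of $\int_0^T|\alpha|^2ds$ and $\int_0^T|\beta|^2ds$ coming from the BMO energy inequality; finally $u^*\in L^2_\cF(0,T;\R^d)$ is deduced by viewing $(X^*,u^*)$ as the solution of a Lipschitz BSDE with terminal datum $X^*_T$. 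Your transformation $\Pi=MX^*$ exploits exactly the same cancellation (multiplication by $M$ absorbs the unbounded $U$ into the forcing terms, leaving bounded homogeneous coefficients $-r$ and $\Gamma^{(1)}\theta/M$), but implements it additively at the SDE level rather than multiplicatively inside the stochastic exponential: indeed the paper's identity for $\rho$ is the ``homogeneous part'' of your equation for $\Pi$. What the two approaches buy is slightly different: the paper's explicit formula delivers moments of all orders of $\sup_t|X^*_t|$ essentially for free, whereas your BDG--Gronwall argument, as written, produces only the second moment --- but that is all the proposition needs, and in exchange you avoid negative moments of stochastic exponentials, the $\ln M$ computation, and the explicit solution altogether, using only standard SDE estimates plus the BMO energy inequality (your observation that the drift must be controlled in the norm $\E{\big(\int_0^T|g_s|ds\big)^2}$ rather than the unavailable $\E{\int_0^T|g_s|^2ds}$ is precisely the right way to handle the product $U'\gamma^{(2)}$ of two BMO integrands). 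Your replacement of the paper's BSDE estimate by It\^o on $|X^*|^2$ plus a quadratic inequality for $\E{\int_0^{\tau_n}|u^*_s|^2ds}$ is also sound, given the a.s.\ finiteness of $\int_0^T|u^*_s|^2ds$, which justifies both the choice of localizing times and the passage to the limit.
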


\begin{proof}
Plug the feedback strategy $u^*$ into the wealth equation (\ref{wealth2}), we get
\begin{eqnarray}\label{formulaX}
 X_t^*&=&\rho_t\Big(x_0-\int_0^t \rho_s^{-1}\alpha_s'\beta_sds+\int_0^t \rho_s^{-1}\beta_sdW_s^\theta\Big),
\end{eqnarray}
with $W^\theta_t=W_t+\int_0^t\theta_sds$ and $\rho_t=e^{\int_0^tr_sds} {\cal E}_t(\alpha\cdot W^\theta)$.

On the one hand,
\begin{eqnarray*}
{\cal E}_t(\alpha\cdot W^\theta)&=&e^{-\int_0^t \frac{|\alpha_s|^2}{2}ds+\int_0^t\alpha_s'(dW_s+\theta_sds)}\\
&=&e^{-\int_0^t \frac{|\alpha_s|^2}{2}ds-\int_0^t\frac{U_s'}{M_s}dW^\theta_s+\int_0^t\frac{\Gamma^{(1)}_s|\theta_s|^2}{M_s}ds +\int_0^t\frac{\Gamma^{(1)}_s\theta_s'}{M_s}dW_s}\\
&=&e^{-\int_0^t
\left[\frac{|\alpha_s|^2}{2}-\frac{1}{2}\left(\frac{\Gamma^{(1)}_s\theta_s}{M}\right)^2-\frac{\Gamma^{(1)}_s|\theta_s|^2}{M_s}
\right]ds-\int_0^t\frac{U_s'}{M_s}dW^\theta_s}{\cal
E}_t\left(\frac{\Gamma^{(1)}\theta}{M}\cdot W\right).
\end{eqnarray*}
Applying Ito's formula to $\ln(M)$, we get
\begin{eqnarray*}
d\ln(M_s)&=& [-2r_s+\frac{U_s'\theta_s}{M_s}-\Gamma^{(1)}_s
\frac{|\theta_s|^2}{M_s}
+\frac{1}{2}\frac{|U_s|^2}{M_s^2}-\Gamma^{(1)}_s\frac{U_s'\theta_s}{M_s^2}]ds+\frac{U_s'}{M_s}dW_s\\
&=&\left[-2r_s+\frac{|\alpha_s|^2}{2}-\frac{1}{2}\left|\frac{\Gamma^{(1)}_s\theta_s}{M_s}\right|^2-\Gamma^{(1)}_s\frac{|\theta_s|^2}{M_s}\right]ds+\frac{U_s'}{M_s}dW_s^\theta.
\end{eqnarray*}
Combining the above equations, we obtain
$${\cal E}_t(\alpha\cdot W^\theta)=\frac{M_0}{M_t}{\cal E}_t\left(\frac{\Gamma^{(1)}\theta}{M}\cdot W\right)e^{-2\int_0^t r_sds}$$
or
$$\rho_t=\frac{M_0}{M_t}{\cal E}_t\left(\frac{\Gamma^{(1)}\theta}{M}\cdot W\right)e^{-\int_0^t r_sds}.$$
By the fact that $M$ and $\frac{1}{M}$ are both bounded and
$\E{\sup_{t\in [0,T]} \left|{\cal
E}_t\left(\frac{\Gamma^{(1)}\theta}{M}\cdot
W\right)\right|^p}<+\infty$ for any $p\in \R$, we have
$\E{\sup_{t\in [0,T]}\rho_t^p}<+\infty$ for any $p\in \R$.

Now we validate  $X^*\in L_\cF^2(\Omega, C(0,T;\R))$ using (\ref{formulaX}). For
any $p>1$,
\begin{eqnarray*}
&&\E{\sup_{t\in [0,T]} \left| \int_0^t \rho_s^{-1}\alpha_s'\beta_s
ds\right|^p}\\
&\le&\E{\sup_{t\in [0,T] }\rho_t^{-p} \left(  \int_0^T|\alpha_s|^2ds+\int_0^T|\beta_s|^2ds\right)^p}\\
&\le&c_p\sqrt{\E{\sup_{t\in [0,T] }\rho_t^{-2p}}\left(\E{\left(  \int_0^T|\alpha_s|^2ds\right)^{2p}} +\E{\left(\int_0^T|\beta_s|^2ds\right)^{2p}}\right)}\\
&<&+\infty.
\end{eqnarray*}
Similarly we have $\E{\sup_{t\in [0,T]} \left| \int_0^t \rho_s^{-1}\theta_s'\beta_sds\right|^p}<+\infty$. Also we have
\begin{eqnarray*}
\E{\sup_{t\in [0,T]} \left| \int_0^t \rho_s^{-1}\beta_s
dW_s\right|^{2p}}
&\le&\bar c_p\E{\left(\int_0^T\rho_s^{-2}|\beta_s|^2 ds\right)^p}\\
&\le&\bar c_p\E{\sup_{t\in [0,T]} \rho_t^{-2p} \left(\int_0^T|\beta_s|^2 ds\right)^p}\\
&<&+\infty,
\end{eqnarray*}
where $c_p,\bar c_p$ are both constants only depending on $p$. These two inequalities lead to $X^*\in L^2_\cF(\Omega; C(0, T; \R))$.

Finally, regarding $(X^*_\cdot, u^*_\cdot)$ as the solution to the BSDE
\begin{equation}
 \left\{\begin{array}{l}
         dX_s=r_s X_sds+\theta_s'u_sds+u_s'dW_s,\qquad s\in [0,T],\\
     X_T=X^*_T.
        \end{array}
 \right.
\end{equation}
By the standard estimates for Lipschitz BSDE, $u^*\in L_\cF^2(0,T;\R^d)$
as soon as $X^*\in L_\cF^2(\Omega, C(0,T;\R))$.
\end{proof}

\subsection{Equilibrium Strategy}
Summarizing the preceding analysis, we obtain finally the main result of this section.
\begin{theorem}\label{randmain}
Let $(M,U)$ and $(\Gamma^{(2)},\gamma^{(2)})$ be the solutions to BSDEs (\ref{MU}) and (\ref{gamma}) respectively, and $\Gamma_s=-\mu_2 e^{\int_s^T r_t dt}$. Then
$$u^*_s=-M_s^{-1}\left[(U_s-\theta_s \mu_1 e^{\int_s^Tr_vdv}) X^*_s+\Gamma_s\theta_s+\gamma^{(2)}_s\right]$$
is an equilibrium strategy.
\end{theorem}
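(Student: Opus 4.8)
The plan is to invoke Theorem~\ref{main}, so the task reduces to exhibiting a quadruple $(u^*,X^*,p,k)$ that solves the flow of FBSDEs (\ref{fbsderand}) for every $t\in[0,T)$, to verifying that the associated $\Lambda(\cdot;t)=p(\cdot;t)\theta_\cdot+k(\cdot;t)$ satisfies both requirements in (\ref{Lambdacond}), and to checking $u^*\in L^2_\cF(0,T;\R^d)$. First I would assemble the candidate from objects already constructed: take $(M,U)$ and $(\Gamma^{(2)},\gamma^{(2)})$ solving (\ref{MU}) and (\ref{gamma}), set $\Gamma^{(1)}_s=\mu_1 e^{\int_s^T r_v dv}$, $\gamma^{(1)}\equiv 0$, $N=M$, $V=U$, $\Gamma_s=-\mu_2 e^{\int_s^T r_t dt}$ and $\Gamma^{(3)}=\Gamma^{(2)}-\Gamma$, then define $p(s;t)$ by (\ref{prand}), $k(s;t)$ by (\ref{krand}), and $u^*$ by (\ref{feed}), with $X^*$ the corresponding wealth process. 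The admissibility $X^*\in L^2_\cF(\Omega;C(0,T;\R))$ and $u^*\in L^2_\cF(0,T;\R^d)$ is precisely the content of the preceding proposition, and the integrability of $(p(\cdot;t),k(\cdot;t))$ follows from the boundedness of $M$, the moment bounds on $X^*$, and the BMO property of $U\cdot W$ and $\gamma^{(2)}\cdot W$; so these points are already in hand.

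Next I would verify that $(u^*,X^*,p,k)$ solves (\ref{fbsderand}). The terminal condition is immediate: evaluating (\ref{prand}) at $s=T$ with $M_T=N_T=1$, $\Gamma^{(1)}_T=\mu_1$, $\Gamma^{(2)}_T=-\mu_2$, $\Gamma^{(3)}_T=0$ gives $p(T;t)=X^*_T-\Et{X^*_T}-\mu_1 X^*_t-\mu_2$. For the dynamics I would apply It\^o's formula to (\ref{prand}) in the variable $s$, using that for fixed $t$ the terms $\Et{N_sX^*_s+\Gamma^{(3)}_s}$ and $\Gamma^{(1)}_sX^*_t$ carry no $dW_s$-part, read off that the diffusion coefficient equals $k(s;t)$ in (\ref{krand}), and then confirm that the drift collapses to $-r_s p(s;t)$ after substituting the coefficient identities for $F_{M,U},F_{N,V},F^{(1)},F^{(2)},F^{(3)}$ together with the definitions of $\alpha_s,\beta_s$. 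This is the same mechanical check as in the proof of Theorem~\ref{detmain} and is routine given those relations.

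The substantive step is computing $\Lambda$ and checking (\ref{Lambdacond}). Substituting (\ref{prand}), (\ref{krand}) and the feedback form of $u^*$, and using $\Gamma^{(2)}_s-\Gamma_s=\Gamma^{(3)}_s$, the $\gamma^{(2)}_s$-terms cancel and the remaining terms regroup into
$$\Lambda(s;t)=\theta_s\,\Xi_s,\qquad \Xi_s\trieq M_sX^*_s-\Et{M_sX^*_s}+\Gamma^{(3)}_s-\Et{\Gamma^{(3)}_s}+\Gamma^{(1)}_s(X^*_s-X^*_t).$$
For the first condition in (\ref{Lambdacond}) I would bound $\Et{\int_t^T|\Lambda(s;t)|\,ds}$ using boundedness of $\theta,M,\Gamma^{(1)},\Gamma^{(3)}$ and $X^*\in L^2_\cF(\Omega;C(0,T;\R))$. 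For the second, since $\theta$ is bounded, conditional Cauchy--Schwarz gives $|\Et{\Lambda(s;t)}|\le C\sqrt{\Et{|\Xi_s|^2}}$, so it suffices to prove $\Et{|\Xi_s|^2}\to 0$ a.s. as $s\downarrow t$. The first two groups of $\Xi_s$ are conditional variances $\mathrm{Var}_t(M_sX^*_s)$ and $\mathrm{Var}_t(\Gamma^{(3)}_s)$, which vanish because $M_sX^*_s\to M_tX^*_t$ and $\Gamma^{(3)}_s\to\Gamma^{(3)}_t$ (both $\cF_t$-measurable limits) by path-continuity, while $\Et{|X^*_s-X^*_t|^2}\to 0$ again by continuity of $X^*$; each limit is justified by conditional dominated convergence with dominating variable $\|M\|_\infty^2\sup_u|X^*_u|^2\in L^1$.

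I expect the main obstacle to be exactly this a.s.\ limit $\lim_{s\downarrow t}\Et{\Lambda(s;t)}=0$: unlike the deterministic setting of Theorem~\ref{detmain}, the coefficient $M_s$ and the premium $\theta_s$ are random, so one cannot pull $M_s$ out of the conditional variance, and the conditional dominated-convergence estimate must be controlled uniformly enough in $s$ to deliver the \emph{continuous} limit $s\downarrow t$ rather than merely a sequential one (e.g.\ by dominating $|X^*_s-X^*_t|^2\le\sup_{t\le u\le s}|X^*_u-X^*_t|^2$, which decreases to $0$). The integrability inputs that make this work---boundedness of $M$ and $1/M$, the BMO property of $U\cdot W$ and $\gamma^{(2)}\cdot W$, and the finite-moment bounds on $X^*$ and $\rho$---are precisely those established in the preceding propositions. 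Once (\ref{Lambdacond}) is confirmed, Theorem~\ref{main} yields that $u^*$ is an equilibrium strategy, which is the assertion.
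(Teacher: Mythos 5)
Your proposal is correct and takes essentially the same route as the paper's own proof: the same assembly of $(p,k)$ from (\ref{prand})--(\ref{krand}) with $N=M$, $V=U$, $\gamma^{(1)}=0$, the same reduction of $\Lambda(s;t)$ to $\left(M_sX^*_s+\Gamma^{(3)}_s-\Et{M_sX^*_s+\Gamma^{(3)}_s}\right)\theta_s+\Gamma^{(1)}_s(X^*_s-X^*_t)\theta_s$, and the same appeal to Theorem \ref{main}. The only difference is that you spell out the conditional Cauchy--Schwarz and dominated-convergence details behind (\ref{Lambdacond}), which the paper simply asserts from the boundedness of $M,\theta,\Gamma^{(1)},\Gamma^{(3)}$ and $\Et{\sup_{s\in[t,T]}(X^*_s)^2}<+\infty$.
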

\begin{proof}
Define  $p, k$ by (\ref{prand}) and (\ref{krand}) (recall that
$N=M$ and $V=U$). It is easy to check that $u^*, X^*, p, k$
satisfies (\ref{fbsderand}). Furthermore,  $\Lambda$ in the
condition (\ref{Lambdacond}) is
\begin{eqnarray*}
\Lambda(s;t)&=&p(s;t)\theta_s+k(s;t)\\
&=&\hspace{-0.3cm}(M_sX^*_s-\Gamma^{(1)}_sX^*_t+\Gamma^{(2)}_s-\Et{M_sX^*_s+\Gamma^{(3)}_s})\theta_s
 +X_s^*U_s+M_s u_s^* +\gamma^{(2)}_s-\gamma^{(1)}_sX^*_t\\
&=&(M_sX^*_s+\Gamma^{(3)}_s-\Et{M_sX^*_s+\Gamma^{(3)}_s})\theta_s
       +\Gamma^{(1)}_s(X^*_s-X^*_t)\theta_s.
\end{eqnarray*}
Since $M, \theta, \Gamma^{(3)}, \Gamma^{(1)}$ are all essentially bounded,
$\Et{\sup_{s\in [t,T]}(X^*_s)^2}<+\infty$, we deduce  that
$\Lambda$ meets condition (\ref{Lambdacond}).  It follows from  Theorem
\ref{main} that $u^*$ is an equilibrium.
\end{proof}

\subsection{Examples}
Equilibrium strategies for mean--variance models have been studied  in \cite{Basak,BM,BMZ} among others in different
frameworks.  In this subsection,  we will compare our results with some existing ones in literature.

\subsubsection{Deterministic risk premium}

Let us first consider the case when the risk premium is deterministic function of time.
Then $U=0$, $\gamma^{(2)}=0$, and
$$M_t=e^{2\int_t^T r_vdv}\left( 1+\mu_1 \int_t^T e^{-\int_s^T r_vdv}|\theta_s|^2ds\right).$$
The equilibrium strategy is given by
$$u^*_t=\frac{\mu_1 e^{\int_t^T r_vdv}}{M_t}\theta_tX_t^*+\frac{\mu_2 e^{\int_t^T r_vdv}}{M_t}\theta_t.$$

\noindent {\bf Case 1:  $\mu_1=0$.}

When $\mu_1=0$, the objective is exactly the same as in \cite{Basak} and \cite{BM}, in which
the equilibrium is however defined within the class of (deterministic) feedback controls.

By Theorem \ref{randmain},
$$u_t^*=e^{-\int_t^T r_vdv}\mu_2\theta_t$$
is a mean-variance equilibrium strategy.
This equilibrium coincides with the one obtained in \cite{Basak} and \cite{BM} although
the definitions of equilibrium are different. The {\it ex-post}  reason is that the feedback part
of our  equilibrium is absent, and so is the gap between the two definitions.

\bigskip

\noindent {\bf Case 2: $\mu_2=0$.}

When $\mu_2=0$, the objective is equivalent to the one in \cite{BMZ}. In this case,
our equilibrium is, explicitly,
$$u_t^*=\frac{\mu_1e^{\int_t^T r_vdv}}{M_t}\theta_t X_t^*.$$

In \cite{BMZ}, the equilibrium is defined for the class of feedback controls as in \cite{BM}. Therein the
equilibrium strategy is derived in a linear feedback form $u^*_t=c_tX^*_t$ with $c_t$ uniquely determined by an integral equation (whose unique solvability
is established). We can easily show that the linear coefficient of our equilibrium above does not satisfy the integral equation
in \cite{BMZ}. This, in turn, indicates the difference between the two definitions of  equilibriums (open-loop and feedback).

\subsubsection{Stochastic risk premium}

When the risk premium  of the market is a stochastic process, the PDE (HJB equation) approach employed by \cite{BM} or \cite{BMZ}, where
the definition of equilibrium is in the class of feedback controls, does no longer work.
To our best knowledge,
our result is the first attempt to formulate and find equilibrium with random market parameters.

\medskip

\noindent {\bf Case 1:  $\mu_1=0$.}

When $\mu_1=0$, $U=0$, $M_t=e^{2\int_t^T r_vdv}$, and our equilibrium is
$$u_t^*=e^{-\int_t^T r_vdv}\mu_2\theta_t- e^{-2\int_t^T r_vdv}\gamma^{(2)}_t.$$
This strategy consists of two parts. The first part is in the same form as
that in the deterministic risk premium case, and the second part is to hedge the uncertainty arising from the randomness
of $\theta$.

\medskip

\noindent {\bf Case 2: $\mu_2=0$.}

When $\mu_2=0$, $\gamma^{(2)}=0$, and our equilibrium is
$$u_t^*=\left(\frac{\mu_1e^{\int_t^T r_vdv}}{M_t}\theta_t-\frac{U_t}{M_t}\right)X_t^*.$$

The linear feedback coefficient in this  equilibrium also consists of two parts.
The first part is formally the same as its deterministic counterpart, whereas the
second part is for the randomness of the parameter $\theta$.

\section{Concluding Remarks}

This paper, we believe, has posed more questions than answers. The flow of FBSDEs (\ref{fbsdegeneral}) is an interesting class of equations, whose
general solvability begs for systematic  investigations. How to adapt the generalized HJB approach of \cite{BM,BMZ} to our open-loop control
framework, even when all the coefficients are
deterministic, warrants a careful study (but notice the fundamental difference in the definitions of equilibrium). Extension beyond the realm of
LQ may open up an entirely new avenue for stochastic control. Finally, how our game theoretic formulation may be extended to other types of
time-inconsistency, e.g., that caused by probability distortion, promises to be an equally exciting research topic. The research on the last 
problem is in progress and will appear in a forthcoming paper.

\end{document}